\newtheorem{assumption}{Assumption A.\!\!}
\newcommand{\R}{\mathbb{R}}
\newcommand{\Rext}{\mathbb{R}\cup\left\{+\infty\right\}}
\newcommand{\set}[1]{\left\{#1\right\}}
\newcommand{\norm}[1]{\Vert#1\Vert}
\newcommand{\Eproof}{\hfill $\square$}
\newcommand{\argmin}{\mathrm{arg}\!\min}
\newcommand{\dom}[1]{\mathrm{dom}(#1)}
\newcommand{\xb}{\mathbf{x}}
\newcommand{\zb}{\mathbf{z}}
\newcommand{\ub}{\mathbf{u}}
\newcommand{\vb}{\mathbf{v}}
\newcommand{\cb}{\mathbf{c}}
\newcommand{\Ab}{\mathbf{A}}
\newcommand{\Bb}{\mathbf{B}}
\newcommand{\rb}{\mathbf{r}}
\renewcommand{\sb}{\mathbf{s}}
\newcommand{\iprods}[1]{\langle #1\rangle}
\newcommand{\lbd}{\boldsymbol{\lambda}}
\newcommand{\Uc}{\mathcal{U}}
\newcommand{\Vc}{\mathcal{V}}
\newcommand{\Xc}{\mathcal{X}}
\newcommand{\Dc}{\mathcal{D}}
\newcommand{\Lc}{\mathcal{L}}
\newcommand{\fopt}{f^{\star}}
\newcommand{\uopt}{\mathbf{u}^{\star}}
\newcommand{\vopt}{\mathbf{v}^{\star}}
\newcommand{\xopt}{\mathbf{x}^{\star}}
\renewcommand{\vopt}{\mathbf{v}^{\star}}
\newcommand{\dopt}{d^{\star}}
\newcommand{\lbds}{\boldsymbol{\lambda}^{\star}}
\newcommand{\uhat}{\hat{\mathbf{u}}}
\newcommand{\vhat}{\hat{\mathbf{v}}}
\newcommand{\ut}{\tilde{\mathbf{u}}}
\newcommand{\vt}{\tilde{\mathbf{v}}}
\newcommand{\xt}{\tilde{\mathbf{x}}}
\newcommand{\ubar}{\bar{\mathbf{u}}}
\newcommand{\vbar}{\bar{\mathbf{v}}}
\newcommand{\xbar}{\bar{\mathbf{x}}}
\newcommand{\lbdh}{\hat{\boldsymbol{\lambda}}}
\newcommand{\prox}{\mathrm{prox}}
\begin{document}

\title{Construction and Iteration-Complexity of Primal Sequences in Alternating Minimization Algorithms
}

\titlerunning{Primal Solution Recovery in Alternating Minimization Algorithms}        

\author{Quoc Tran-Dinh
}


\institute{Quoc Tran-Dinh \at
              Department of Statistics and Operations Research\\
              The University of North Carolina at Chapel Hill, USA \\
              \email{quoctd@email.unc.edu}           
}

\date{Received: date / Accepted: date}

\maketitle
\vspace{-2ex}
\begin{abstract}
We introduce a new weighted averaging scheme using  ``Fenchel-type'' operators to recover primal solutions in the alternating minimization-type algorithm (AMA) for prototype constrained convex optimization.
Our approach combines the classical AMA idea in \cite{Tseng1991} and  Nesterov's prox-function smoothing technique without requiring the strong convexity of the objective function.
We develop a new non-accelerated primal-dual AMA method and estimate its primal convergence rate  both on the objective residual and on the feasibility gap.
Then, we incorporate Nesterov's accelerated step into this algorithm and obtain a new accelerated primal-dual AMA variant endowed with a rigorous convergence rate guarantee.
We show that the worst-case iteration-complexity in this algorithm is optimal (in the sense of first-oder black-box models),  without imposing the full strong convexity assumption on the objective.

\keywords{Alternating minimization algorithm \and smoothing technique \and primal solution recovery \and accelerated first-oder method \and constrained convex optimization}
\end{abstract}

\vspace{-2ex}
\section{Introduction}\label{sec:intro}
\vspace{-2ex}
This paper studies a new weighted-averaging strategy  in alternating minimization-type algorithms (AMA)  to recover a primal solution of the following  constrained convex optimization problem:
\begin{equation}\label{eq:constr_cvx}
\fopt := \left\{\begin{array}{ll}
\displaystyle\min_{\ub,\vb} & \set{ f(\xb) := g(\ub) + h(\vb) }\\
\mathrm{s.t.} & \Ab\ub + \Bb\vb = \cb, ~~\ub\in\Uc, ~\vb\in\Vc,
\end{array}\right.
\end{equation}
where $g : \R^{p_1}\to\Rext$ and $h : \R^{p_2}\to\Rext$ are both proper, closed and convex (not necessarily strongly convex),  ($p_1 + p_2 = p$, $\Ab\in\R^{n\times p_1}$, $\Bb\in\R^{n\times p_2}$,  $\cb\in\R^n$, and $\Uc\subset\R^{p_1}$ and $\Vc\subset\R^{p_2}$ are two nonempty, closed and convex sets. 

Problem~\eqref{eq:constr_cvx} surprisingly  covers a broad class of constrained convex  programs, including composite convex minimization, general linear constrained convex optimization problems, and conic programs.

Primal-dual methods handle problem \eqref{eq:constr_cvx} together with its dual formulation, and generate a primal-dual sequence so that it converges to a primal and dual solution of \eqref{eq:constr_cvx}.
Research on primal-dual methods has been extensively studied in the literature for many decades, see, e.g., \cite{Bertsekas1996d,Rockafellar1970,Wright1997} and the references quoted therein.
However, such methods have attracted a great attention in the past decade due to new applications in signal and image processing, economics, machine learning, and statistics.
Various primal-dual methods have been rediscovered and extended, not only from algorithmic perspectives, but also from theoretical convergence guarantees.
Despite of this great attempt in the algorithmic development,  the corresponding supporting theory has not been well-developed, especially, the algorithms with rigorous convergence guarantees and low complexity-per-iteration.

Perhaps, applying first order methods to the dual is the most nature approach to solve constrained problems of the form \eqref{eq:constr_cvx}.
By means of the Lagrange duality theory, we can formulate the dual problem of \eqref{eq:constr_cvx} as a convex problem, where existing convex optimization techniques can be applied to solve it.
Depending on the structure assumptions imposing on \eqref{eq:constr_cvx}, the dual problem possesses useful properties that can be exploited to develop algorithms for the dual. 
For instance, we can use subgradient, gradient, proximal-gradient, as well as other proximal and splitting techniques to solve this problem. 
Then, the primal solutions of \eqref{eq:constr_cvx} can be recovered from the dual solutions \cite{necoara2014iteration,Yurtsever2015}.
Among many other primal-dual splitting methods, alternating minimization algorithm (AMA) proposed by Tseng \cite{Tseng1991} becomes one of the most popular and powerful methods to solve \eqref{eq:constr_cvx} when $g$ and $h$ are nonsmooth and convex, and either $g$ or $h$ is strongly convex.
Unfortunately, to the best of our knowledge, there has existed no optimization scheme to recover primal solutions of \eqref{eq:constr_cvx} in AMAs with convergence rate guarantees on both the primal objective residual and the feasibility gap.

If $g$ and $h$ are nonsmooth, then numerical methods for solving \eqref{eq:constr_cvx} often rely on the proximal operators of $g$ and $h$. 
Mathematically, a proximal operator of a proper, closed, and convex function $\varphi : \R^p\to\Rext$ is defined as:
\vspace{-1ex}
\begin{equation}\label{eq:prox}
\prox_{\varphi}(\xb) := \argmin_{\zb}\set{ \varphi(\zb) + (1/2)\norm{\zb - \xb}^2}.
\vspace{-1ex}
\end{equation}
If $\prox_{\varphi}$ can be computed efficiently, i.e., by a closed form or by a polynomial time algorithm, then we say that $\varphi$ has a ``tractable proximity'' operator.
There exist many smooth and nonsmooth convex functions with tractable proximity operators as indicated in, e.g., \cite{Combettes2011,Parikh2013}.
The proximal operator is in fact a special case of the resolvent  in monotone inclusions \cite{Rockafellar1976b}.
Principally, the optimality condition for  \eqref{eq:constr_cvx} can be cast into a monotone inclusion \cite{Bauschke2011,Facchinei2003}.
By mean of proximity operators and gradients, splitting approaches in monotone inclusions can be applied to solve such a problem \cite{Combettes2014,Chambolle2011,Facchinei2003}.
However, due to this generalization, the convergence guarantees and the convergence rates of these algorithms often achieve via a primal-dual gap or residual metric joined both the primal and dual variables.
Such convergence guarantees do not reveal the complexity bounds of the primal sequence for \eqref{eq:constr_cvx} at intermediate iterations when we terminate the algorithm at a desired accuracy.

Our approach in this paper is briefly described as follows.
First, since we work with non-strongly convex objectives $g$ and $h$, we employ Nesterov's smoothing technique via prox-functions \cite{Nesterov2005c} to partially smooth the dual function.
Then, we apply the forward-backward splitting method to solve the smoothed dual problem, which is exactly the AMA method in \cite{Tseng1991}.
Next, we introduce a new weighted averaging scheme using  the Fenchel-type operators (c.f. \eqref{eq:sharp_oper}) to generate the primal sequence simultaneously with the dual one. 
We then prove convergence rate guarantees for \eqref{eq:constr_cvx} in the primal variable as opposed to the dual one as in \cite{Goldstein2012}. 
Finally, by incorporating Nesterov's acceleration step into the forward-backward splitting method, we obtain an accelerated primal-dual variant for solving \eqref{eq:constr_cvx} with a primal convergence rate guarantee.
Interestingly, we can show that the primal sequence converges to an optimal solution of \eqref{eq:constr_cvx} with the $\mathcal{O}(1/k^2)$-optimal rate provided that only $g$ or $h$ is strongly convex, but not the whole function $f$ as in accelerated dual gradient methods \cite{necoara2014iteration}, where $k$ is the iteration counter.

\vspace{-2ex}
\paragraph{Our contributions:} Our specific contributions can be summarized as follows:
\vspace{-1.2ex}
\begin{itemize}
\item[$\mathrm{a)}$] We propose to combine Nesterov's smoothing technique, the alternating minimization idea, and the weighted-averaging strategy to develop a new primal-dual AMA algorithm for solving \eqref{eq:constr_cvx} without strong convexity assumption on $g$ or $h$.
We characterize the convergence rate on the absolute primal objective residual $\vert f(\xbar^k) - \fopt\vert$ and feasibility gap $\Vert\Ab\bar{\ub}^k + \Bb\bar{\vb}^k - \cb\Vert$ for the averaging primal sequence $\set{\xbar^k}$.
By an appropriate choice of the smoothness parameter, we  provide the worst-case iteration-complexity of this algorithm to obtain an $\epsilon$-primal solution. 

\item[$\mathrm{b)}$] By incorperatiing  Nesterov's accelerated step, we develop a new accelerated primal-dual AMA variant for solving \eqref{eq:constr_cvx}, and characterize its worst-case iteration-complexity which  is optimal in the sense of first-oder black-box models \cite{Nemirovskii1983}.

\item[$\mathrm{c)}$] When either $g$ or $h$ is strongly convex, we recover the standard AMA algorithm as in  \cite{Goldstein2012}, but with our averaging strategy, we obtain the $\mathcal{O}(1/k^2)$-convergence rate  on $\vert f(\xbar^k) - \fopt\vert$ and  $\Vert\Ab\bar{\ub}^k + \Bb\bar{\vb}^k - \cb\Vert$ separably for the primal problem \eqref{eq:constr_cvx}, not for its dual.
\end{itemize}
\vspace{-1ex}
Let us emphasize the following points of our contributions.
First, we can view the algorithms presented in this paper as the ISTA and FISTA schemes \cite{Beck2009} applied to the smoothed dual problem of \eqref{eq:constr_cvx} instead the original dual of \eqref{eq:constr_cvx} as in \cite{Goldstein2012}.
The convergence rate on the dual objective residual is well-known and standard, while the convergence rates on the primal sequence are new.
Second, we adapt the weights  in our averaging primal sequence (c.f. \eqref{eq:averaging_sequence}) to the local Lipschitz constant via a back-tracking line-search, which  potentially increases the empirical performance of the algorithms.
Third, the averaging primal sequence is computed via an additional sharp-operator of $h_{\Vc}$ (c.f. \eqref{eq:sharp_oper}) instead of the current primal iterate. This computation can be done efficiently (e.g., in a closed form) when $h_{\Vc}$ has a decomposable structure.

\vspace{-3ex}
\paragraph{Paper organization:}
The rest of this paper is organized as follows.
Section \ref{sec:pd_framework} briefly describes standard Lagrange duality framework for \eqref{eq:constr_cvx}, and shows how to apply Nesterov's smoothing idea to the dual problem.
The main results are presented in Sections \ref{sec:ama_algs} and \ref{sec:ac_ama}, where the two new algorithms and their convergence are provided.
Section \ref{sec:strong_convexity_case} is devoted to investigating the strongly convex case.
Concluding remarks are given in Section \ref{sec:concluding}, while  technical proof   is moved to the appendix.

\vspace{-4ex}
\section{Primal-dual framework and smoothing technique}\label{sec:pd_framework}
\vspace{-2.5ex}
First, we briefly present the Lagrange duality framework for \eqref{eq:constr_cvx}. 
Then we show how to apply Nesterov's smoothing technique to smooth the dual function of \eqref{eq:constr_cvx}.

\vspace{-3ex}
\subsection{The Lagrange primal-dual framework}
\vspace{-2ex}
Let $\xb := (\ub, \vb)$ denote the primal variables, and $\Dc := \set{ \xb \in \Uc\times\Vc : \Ab\ub + \Bb\vb = \cb}$ denote the feasible set of \eqref{eq:constr_cvx}.
We define the Lagrange function of \eqref{eq:constr_cvx} corresponding to the linear constraint $\Ab\ub + \Bb\vb = \cb$ as $\Lc(\xb,  \lbd) := g(\ub) + h(\vb) + \iprods{\lbd, \cb - \Ab\ub - \Bb\vb}$, where $\lbd$ is the vector of Lagrange multipliers. 
Then, we can define the dual function $d$ of \eqref{eq:constr_cvx} as
\begin{equation}\label{eq:dual_func}
d(\lbd) := \min_{\ub\in\Uc,\vb\in\Vc}\set{ g(\ub) + h(\vb) + \iprods{\lbd, \cb - \Ab\ub - \Bb\vb}}.
\end{equation}
Clearly, $d$ can be split into three terms $d(\lbd) = d^1(\lbd) + d^2(\lbd) + \iprods{\cb,\lbd}$, where
\begin{equation}\label{eq:dual_func_di}
\left\{\begin{array}{ll}
d^1(\lbd) &:= \displaystyle\min_{\ub\in\Uc}\set{ g(\ub) - \iprods{\Ab^T\lbd, \ub} },\vspace{0.75ex}\\
d^2(\lbd) &:= \displaystyle\min_{\vb\in\Vc}\set{ h(\vb) - \iprods{\Bb^T\lbd, \vb} }.
\end{array}\right.
\end{equation}
Using $d$, we can define the dual problem of \eqref{eq:constr_cvx} as
\begin{equation}\label{eq:dual_prob}
\dopt := \max_{\lbd\in\R^n}d(\lbd).
\end{equation}
We say that problem \eqref{eq:constr_cvx} satisfies the Slater condition if
\begin{equation}\label{eq:slater_cond}
\mathrm{ri}(\Xc) \cap \set{\Ab\ub + \Bb\vb =  \cb} \neq\emptyset,
\end{equation}
where $\Xc := \Uc\times \Vc$ and $\mathrm{ri}(\Xc)$ is a the relative interior of $\Xc$ \cite{Rockafellar1970}.

In this paper, we require the following blanket assumptions, which are  standard in convex optimization.

\vspace{-1ex}
\begin{assumption}\label{as:A1}
The functions $g$ and $h$ are both proper, closed, and convex $($not necessarily strongly convex$)$. 
The solution set $\Xc^{\star}$ of \eqref{eq:constr_cvx} is nonempty.
The Slater condition \eqref{eq:slater_cond} holds for \eqref{eq:constr_cvx}.
\end{assumption}
\vspace{-1ex}

It is well-known that, under Assumption A.\ref{as:A1},  strong duality in \eqref{eq:constr_cvx} and \eqref{eq:dual_prob} holds, i.e., we have zero duality gap which is expressed as $\fopt - \dopt = 0$. 
Moreover, for any feasible point $(\xb, \lbd)\in\dom{f}\times\R^n$ and any primal-dual solution $(\xopt,\lbds)$ with $\xopt := (\uopt,\vopt) \in \Xc^{\star}$ we have: $\Lc(\xopt,\lbd) \leq \Lc(\xopt, \lbds) = \fopt = \dopt \leq \Lc(\xb,\lbds)$ for all $\xb\in\Xc$ and $\lbd\in\R^n$.

Now, let us consider the components $d^1$ and $d^2$ of \eqref{eq:dual_func_di}. 
Indeed, we can write these components as
\begin{equation*}
\begin{array}{ll}
d^1(\lbd) &= -\displaystyle\max_{\ub\in\Uc}\set{\iprods{\Ab^T\lbd,\ub} - g(\ub)} = -g_{\Uc}^{*}(\Ab^T\lbd),\\
d^2(\lbd) &= -\displaystyle\max_{\vb\in\Vc}\set{\iprods{\Bb^T\lbd,\vb} - h(\vb)} = -h_{\Vc}^{*}(\Bb^T\lbd),\\
\end{array}
\end{equation*}
where $g_{\Uc}^{*}$ and $h_{\Vc}^{*}$ are the Fenchel conjugate of $g_{\Uc} := g + \delta_{\Uc}$ and $h_{\Vc} := h + \delta_{\Vc}$, respectively \cite{Rockafellar1970}.
If we define two multivalued maps
\begin{equation}\label{eq:sharp_oper}
\ub^{\#}(\sb) := \arg\!\max_{\ub\in\Uc}\set{\iprods{\sb, \ub} - g(\ub)},~\text{and}~\vb^{\#}(\sb) := \arg\!\max_{\vb\in\Vc}\set{\iprods{\sb, \vb} - h(\vb)},
\end{equation}
then the solution $\ub^{\ast}(\lbd)$ of $d^1$ in \eqref{eq:dual_func_di} is given by $\ub^{\ast}(\lbd) \in \ub^{\#}(\Ab^T\lbd) \equiv \partial{g_{\Uc}^{*}}(\Ab^T\lbd)$. 
Similarly, the solution $\vb^{\ast}(\lbd)$ of $d^2$ in \eqref{eq:dual_func_di} is given by $\vb^{\ast}(\lbd) \in \vb^{\#}(\Bb^T\lbd) \equiv \partial{h_{\Vc}^{*}}(\Bb^T\lbd)$.
We call $\ub^{\#}$ and $\vb^{\#}$ the \textit{sharp}-operator of $g$ and $h$, respectively \cite{Yurtsever2015}.
Each \textit{oracle call} to $d$ queries one element of the \textit{sharp}-operators $\ub^{\#}$ and $\vb^{\#}$ at a given $\lbd\in\R^n$.

By using the saddle point relation, we can show that $f^{*} \leq \Lc(\xb, \lbds) = f(\xb) - \iprods{\Ab\ub + \Bb\vb - \cb, \lbds} \leq f(\xb) +  \norm{\Ab\ub + \Bb\vb - \cb}\norm{\lbds}$ for any $\xb\in\Xc$. Hence, we have
\begin{equation}\label{eq:lower_bound}
-\norm{\lbds}\norm{\Ab\ub + \Bb\vb - \cb} \leq f(\xb) - \fopt \leq f(\xb) - d(\lbd).
\end{equation}
In this paper, we only assume that the second dual component $d^2$ defined by \eqref{eq:dual_func_di} satisfies the following assumption.

\vspace{-1ex}
\begin{assumption}\label{as:A1b}
The dual component $d^2$ defined by \eqref{eq:dual_func_di} is finite.
\end{assumption}
\vspace{-1ex}

\noindent This assumption holds in particular when $\Vc$ is bounded. Moreover, $\vb^{*}(\lbd)$ is well-defined for any $\lbd\in\R^n$.
Throughout this paper, we assume that Assumptions A.\ref{as:A1} and A.\ref{as:A1b} holds without referring to them again.

\vspace{-3.5ex}
\subsection{The primal weighted averaging sequence}
\vspace{-2ex}
Given a sequence of the primal approximation $\set{\tilde{\xb}^k}_{k\geq 0}$, where $\tilde{\xb}^k := (\tilde{\ub}^k,\tilde{\vb}^k) \in \Xc$. 
We define the following weighted averaging sequence $\set{\xbar^k}$ with $\xbar^k := (\bar{\ub}^k, \bar{\vb}^k)$ as
\begin{equation}\label{eq:averaging_sequence}
\bar{\ub}^k := S_k^{-1}\sum_{i=1}^kw_i\tilde{\ub}^i, ~~~~~~~~\bar{\vb}^k := S_k^{-1}\sum_{i=0}^kw_i\tilde{\vb}^i, ~~~~ \text{and}~~~~S_k := \sum_{i=0}^kw_i,
\end{equation}
where $\set{w_i}_{i\geq0}\subset\R_{++}$ is the corresponding weights.

To avoid storing the whole sequence $\set{\tilde{\ub}^k, \tilde{\vb}^k)}$ in our algorithms, we can compute $\set{\bar{\xb}^k}$ recursively as follows:
\begin{equation}\label{eq:averaging_sequence1}
\bar{\ub}^k := (1-\tau_k)\bar{\ub}^{k-1} + \tau_k\tilde{\ub}^k, ~~\text{and}~~~\bar{\vb}^k := (1-\tau_k)\bar{\vb}^{k-1} + \tau_k\tilde{\vb}^k, ~~\forall k\geq 1,
\end{equation}
where $\tau_k := \frac{w_k}{S_k} \in [0, 1]$, $\ubar^0 := \tilde{\ub}^0$, and $\vbar^0 := \tilde{\vb}^0$. 
Clearly, for any convex function $f$, we have $f(\xbar^k) \leq S_k^{-1}\displaystyle\sum_{i=0}^kw_if(\tilde{\xb}^i)$ by the well-known Jensen  inequality.

\vspace{-3.5ex}
\paragraph{\textbf{Approximate solutions:}}
Our goal is to approximate a solution $\xb^{\star}$ of \eqref{eq:constr_cvx} by $\xopt_{\epsilon}$ in the following sense:

\vspace{-2ex}
\begin{definition}\label{de:approx_sols}
Given an accuracy level $\epsilon > 0$, a point $\xopt_{\epsilon} := (\uopt_{\epsilon}, \vopt_{\epsilon})\in\Xc$ is said to be an $\epsilon$-solution of \eqref{eq:constr_cvx} if
\begin{equation}\label{eq:approx_sols}
\vert f(\xopt_{\epsilon}) - \fopt\vert \leq \epsilon ~~\text{and}~~~~ \norm{\Ab\uopt_{\epsilon} + \Bb\vopt_{\epsilon} - \cb}  \leq \epsilon.
\end{equation}
\end{definition}
\vspace{-1ex}
Here, we call $\vert f(\xopt_{\epsilon}) - \fopt\vert$ the [absolute] primal objective residual and $\norm{\Ab\uopt_{\epsilon} + \Bb\vopt_{\epsilon} - \cb}$ the primal feasibility gap. 
The condition $\xopt_{\epsilon} \in\Xc$ is in general not restrictive since, in many cases, $\Xc$ is a simple set (e.g., a box, a simplex, or a conic cone) so that the projection onto $\Xc$ can exactly be guaranteed. 

\vspace{-3.5ex}
\subsection{Smoothing the dual component}
\vspace{-2ex}
As mentioned earlier, we first focus on the non-strongly convex functions $g$ and $h$.
In this case, we can not directly apply the standard AMA \cite{Tseng1991} to solve \eqref{eq:constr_cvx}.
We smooth $g$ by using a prox-function as follows.

A continuous and strongly convex function $p_{\Uc}$ with the strong convexity parameter $\mu_p > 0$ is called a prox-function for $\Uc$ if $\Uc\subseteq \dom{p_{\Uc}}$ \cite{Nesterov2005c}.
We consider the following smoothed function $d^1_{\gamma}$ for $d^1$:
\begin{equation}\label{eq:d1_gamma}
d_{\gamma}^1(\lbd) := \min_{\ub\in\Uc}\set{ g(\ub) - \iprods{\lbd, \Ab\ub} + \gamma p_{\Uc}(\ub)},
\end{equation}
where $\gamma > 0$ is a smoothness parameter.

It is well-known that $d^1_{\gamma}$ is concave and smooth. 
Moreover, as shown in \cite{Nesterov2005c}, its gradient is given by $\nabla{d^1_{\gamma}}(\lbd) = -\Ab\ub_{\gamma}^{*}(\lbd)$, which is Lipschitz continuous with the Lipschitz constant $L^{\gamma}_{d^1} := \frac{\norm{\Ab}^2}{\gamma\mu_p}$, where $\ub_{\gamma}^{*}(\lbd)$ is the unique solution of the minimization problem in \eqref{eq:d1_gamma}.
In addition, we have the following estimate
\begin{equation}\label{eq:d1_gamma_est}
d_{\gamma}^1(\lbd) - \gamma D_{\Uc} \leq d^1(\lbd) \leq d^1_{\gamma}(\lbd), ~~\forall~\lbd\in\R^n,
\end{equation}
where $D_{\Uc}$ is the prox-diameter of $\Uc$, i.e.,
\begin{equation}\label{eq:DU}
D_{\Uc} := \sup_{\ub\in\Uc}p_{\Uc}(\ub).
\end{equation}
In order to develop algorithms, we require the following additional assumption.

\begin{assumption}\label{as:A4}
The quantity $D_{\Uc}$ defined by \eqref{eq:DU} is finite, i.e., $0 \leq D_{\Uc} < +\infty$.
\end{assumption}

Clearly, if $\Uc$ is bounded, then Assumption A.\ref{as:A4} is automatically satisfied. 
Under Assumption A.\ref{as:A4}, we consider the following convex problem:
\begin{equation}\label{eq:smoothed_dual_prob}
d_{\gamma}^{\star} := \max_{\lbd\in\R^n}\set{ d_{\gamma}(\lbd) := d_{\gamma}^1(\lbd) + d^2(\lbd) + \iprods{\cb,\lbd} }.
\end{equation}
Using \eqref{eq:d1_gamma_est}, we can see that $d^{\star}_{\gamma}$ converges to $d^{\star}$ as $\gamma \downarrow 0^{+}$.
Hence, \eqref{eq:smoothed_dual_prob} can be considered as an approximation to the dual problem \eqref{eq:dual_prob}.
We call \eqref{eq:smoothed_dual_prob} the \textit{smoothed} dual problem of \eqref{eq:constr_cvx}.

\vspace{-3ex}
\section{The non-accelerated primal-dual alternating minimization algorithm}\label{sec:ama_algs}
\vspace{-2ex}
Since $d_{\gamma}^1$ is Lipschitz gradient, we can apply the proximal-gradient method (ISTA \cite{Beck2009}) to solve \eqref{eq:smoothed_dual_prob}. This leads to the AMA scheme presented in \cite{Goldstein2012,Tseng1991}. 

The main iteration of the alternating minimization algorithm (AMA) \cite{Tseng1991} applying to the corresponding primal problem of \eqref{eq:smoothed_dual_prob} can be written as
\begin{equation}\label{eq:ama_scheme}
\left\{\begin{array}{ll}
\hat{\ub}^{k+1} &:= \displaystyle\argmin_{\ub\in\Uc}\big\{ g(\ub) - \iprods{\Ab^T\lbdh^k, \ub} + \gamma p_{\Uc}(\ub)\big\} = \nabla{g_{\gamma}^{*}}(\Ab^T\lbdh^k),\\
\hat{\vb}^{k+1} &:= \displaystyle\argmin_{\vb\in\Vc}\big\{ h(\vb) - \iprods{\Bb^T\lbdh^k, \vb} + \frac{\eta_k}{2}\norm{\cb - \Ab\hat{\ub}^{k+1} - \Bb\vb}^2\big\},\\
\lbd^{k+1} &:= \lbdh^k + \eta_k(\cb - \Ab\hat{\ub}^{k+1} - \Bb\hat{\vb}^{k+1}),
\end{array}\right.
\end{equation}
where $\lbdh^k\in\R^n$ is given, $\eta_k > 0$ is the penalty parameter, and $g_{\gamma}(\cdot) := g(\cdot) + \gamma p_{\Uc}(\cdot)$.
We define the quadratic surrogate of $d^1$ as follows:
\begin{equation}\label{eq:Q_surrogate}
Q^{\gamma}_{L_k}(\lbd;\lbdh^k) := d_{\gamma}^1(\lbdh^k) + \iprods{\nabla{d_{\gamma}^1}(\lbdh^k),\lbd - \lbdh^k} - \frac{L_k}{2}\norm{\lbd - \lbdh^k}^2.
\end{equation}
Then the following lemma provides a key estimate to prove the convergence of the algorithms in the sequel, whose proof can be found in Appendix \ref{apdx:le:upper_surrogate}.

\begin{lemma}\label{le:upper_surrogate}
The smoothed dual component $d^1_{\gamma}$ defined by \eqref{eq:d1_gamma} is concave and smooth. It satisfies the following estimate
\begin{equation}\label{eq:quad_approx}
d^1_{\gamma}(\lbd) + \iprods{\nabla{d^1_{\gamma}}(\lbd),\tilde{\lbd} - \lbd} - \frac{L_{d^1}}{2}\norm{\tilde{\lbd} - \lbd}^2 \leq d^1(\tilde{\lbd}), ~~\forall\lbd,\tilde{\lbd}\in\R^n,
\end{equation}
where $L^{\gamma}_{d^1} := \frac{\Vert\Ab\Vert^2}{\gamma\mu_p}$.

Let  $\lbd^{k+1}$ be the point generated by \eqref{eq:ama_scheme} from $\hat{\lbd}^k$ and $\eta_k$. 
Then, \eqref{eq:ama_scheme} is equivalent to the forward-backward splitting scheme applying to the smoothed dual problem \eqref{eq:smoothed_dual_prob}, i.e.,
\begin{equation}\label{eq:forward_backward}
\lbd^{k+1} := \mathrm{prox}_{(-\eta_kd^2)}\left( \lbdh^k + \eta_k\nabla{d}^1_{\gamma}(\lbdh^k)\right).
\end{equation}
In addition, with $Q^{\gamma}_{L_k}$ defined by \eqref{eq:Q_surrogate}, if the following condition holds
\begin{align}\label{eq:linesearch_cond}
d^1_{\gamma}(\lbd^{k+1}) \geq Q^{\gamma}_{L_k}(\lbd^{k+1};\lbdh^k),
\end{align}
then, for any $\lbd\in\R^n$, the following estimates hold
\begin{align}\label{eq:upper_surrogate}
d_{\gamma}(\lbd^{k+1}) &\geq \ell_k^{\gamma}(\lbd) + \frac{1}{\eta_k}\iprods{\lbd^{k+1} - \lbdh^k, \lbdh^k - \lbd} + \left(\frac{1}{\eta_k}-\frac{L_k}{2}\right)\norm{\lbdh^k- \lbd^{k+1}}^2\nonumber\\
&\geq d_{\gamma}(\lbd) +  \frac{1}{\eta_k}\iprods{\lbd^{k+1} - \lbdh^k, \lbdh^k - \lbd} + \left(\frac{1}{\eta_k}-\frac{L_k}{2}\right)\norm{\lbdh^k - \lbd^{k+1}}^2,
\end{align}
where $\ell^{\gamma}_k(\lbd) := d_{\gamma}^1(\lbdh^k) + \iprods{\nabla{d_{\gamma}^1}(\lbdh^k), \lbd - \lbdh^k} + d^2(\lbd^{k+1}) + \iprods{\nabla{d^2}(\lbd^{k+1}),\lbd - \lbd^{k+1}} + \iprods{\cb, \lbd}$, and $\nabla{d^2}(\lbd^{k+1})\in\partial{d^2}(\lbd^{k+1})$ is a subgradient of $d^2$ at $\lbd^{k+1}$.
\end{lemma}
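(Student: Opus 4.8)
The plan is to read the statement as three separate assertions and dispatch them in increasing order of difficulty. The first two --- that $d^1_\gamma$ is concave and smooth with the quadratic bound \eqref{eq:quad_approx}, and that the AMA recursion \eqref{eq:ama_scheme} coincides with the forward--backward step \eqref{eq:forward_backward} --- are in essence Nesterov's smoothing lemma and Tseng's reformulation, so I would keep these short. The two-line estimate \eqref{eq:upper_surrogate} is the substantive part and is where I would spend most of the effort.

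For the first assertion, I would start from the fact that the inner objective $\ub \mapsto g(\ub) - \iprods{\lbd,\Ab\ub} + \gamma p_{\Uc}(\ub)$ in \eqref{eq:d1_gamma} is $\gamma\mu_p$-strongly convex (since $p_{\Uc}$ is $\mu_p$-strongly convex), hence has the unique minimizer $\ub_{\gamma}^{*}(\lbd)$. Concavity of $d^1_\gamma$ is immediate as a pointwise infimum of functions affine in $\lbd$. Differentiability with $\nabla d^1_\gamma(\lbd) = -\Ab\ub_{\gamma}^{*}(\lbd)$ follows from Danskin's theorem, and the bound $\norm{\ub_{\gamma}^{*}(\lbd) - \ub_{\gamma}^{*}(\tilde{\lbd})} \leq \frac{\norm{\Ab}}{\gamma\mu_p}\norm{\lbd - \tilde{\lbd}}$ --- obtained by writing the optimality conditions at $\lbd$ and $\tilde{\lbd}$ and pairing them against $\ub_{\gamma}^{*}(\lbd) - \ub_{\gamma}^{*}(\tilde{\lbd})$ via strong monotonicity --- yields the gradient Lipschitz constant $L^{\gamma}_{d^1} = \norm{\Ab}^2/(\gamma\mu_p)$. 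The quadratic estimate is then the standard descent inequality for a concave function with $L^{\gamma}_{d^1}$-Lipschitz gradient applied to $d^1_\gamma$. For the equivalence with \eqref{eq:forward_backward}, I would write the optimality condition of the $\vb$-subproblem in \eqref{eq:ama_scheme} and substitute the $\lbd$-update: the penalty gradient collapses via $\eta_k\Bb^T(\Ab\hat{\ub}^{k+1} + \Bb\hat{\vb}^{k+1} - \cb) = \Bb^T(\lbdh^k - \lbd^{k+1})$ to $\Bb^T\lbd^{k+1} \in \partial(h + \delta_{\Vc})(\hat{\vb}^{k+1})$. This identifies $\hat{\vb}^{k+1}$ as a maximizer defining $d^2(\lbd^{k+1})$, so $-\Bb\hat{\vb}^{k+1} = \nabla d^2(\lbd^{k+1}) \in \partial d^2(\lbd^{k+1})$, and together with $\nabla d^1_\gamma(\lbdh^k) = -\Ab\hat{\ub}^{k+1}$ recognizes the $\lbd$-update as the resolvent (prox) step \eqref{eq:forward_backward}. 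Equivalently it gives the optimality relation
\begin{equation*}
\nabla d^1_\gamma(\lbdh^k) + \nabla d^2(\lbd^{k+1}) + \cb = \tfrac{1}{\eta_k}(\lbd^{k+1} - \lbdh^k),
\end{equation*}
which I will reuse below; this is where Tseng's forward--backward interpretation enters.

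The core is \eqref{eq:upper_surrogate}, which I would establish in one pass. First, the line-search condition \eqref{eq:linesearch_cond} is exactly the descent inequality $d^1_\gamma(\lbd^{k+1}) \geq d^1_\gamma(\lbdh^k) + \iprods{\nabla d^1_\gamma(\lbdh^k), \lbd^{k+1} - \lbdh^k} - \frac{L_k}{2}\norm{\lbd^{k+1} - \lbdh^k}^2$; adding the exact terms $d^2(\lbd^{k+1}) + \iprods{\cb,\lbd^{k+1}}$ lower-bounds $d_{\gamma}(\lbd^{k+1})$. Then I would insert an arbitrary $\lbd$: writing $\iprods{\nabla d^1_\gamma(\lbdh^k), \lbd^{k+1} - \lbdh^k} = \iprods{\nabla d^1_\gamma(\lbdh^k), \lbd - \lbdh^k} + \iprods{\nabla d^1_\gamma(\lbdh^k), \lbd^{k+1} - \lbd}$, splitting $\iprods{\cb,\lbd^{k+1}}$ in the same way, and adding the null term $\pm\iprods{\nabla d^2(\lbd^{k+1}), \lbd - \lbd^{k+1}}$ assembles precisely $\ell^{\gamma}_k(\lbd)$ and leaves the residual $\iprods{\nabla d^1_\gamma(\lbdh^k) + \nabla d^2(\lbd^{k+1}) + \cb,\, \lbd^{k+1} - \lbd} - \frac{L_k}{2}\norm{\lbd^{k+1} - \lbdh^k}^2$. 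Substituting the optimality relation turns this inner product into $\frac{1}{\eta_k}\iprods{\lbd^{k+1} - \lbdh^k, \lbd^{k+1} - \lbd}$, and the elementary identity $\iprods{\lbd^{k+1} - \lbdh^k, \lbd^{k+1} - \lbd} = \norm{\lbd^{k+1} - \lbdh^k}^2 + \iprods{\lbd^{k+1} - \lbdh^k, \lbdh^k - \lbd}$ collects the quadratic part into the stated coefficient $\frac{1}{\eta_k} - \frac{L_k}{2}$, giving the first inequality. The second inequality is then immediate: by concavity of $d^1_\gamma$ and of $d^2$, the two supporting lines in $\ell^{\gamma}_k$ overestimate $d^1_\gamma(\lbd)$ and $d^2(\lbd)$, so $\ell^{\gamma}_k(\lbd) \geq d_{\gamma}(\lbd)$.

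The main obstacle I anticipate is purely the bookkeeping of this last part: one must keep $d^2(\lbd^{k+1})$ un-linearized while linearizing $d^1_\gamma$, invoke the line-search (not mere concavity) for the smooth block so that the constant $L_k$ appears, and apply the prox-optimality relation at exactly the right moment so that the cross term and the $\frac{1}{\eta_k}\norm{\lbd^{k+1} - \lbdh^k}^2$ arising from the inner-product identity fuse into the single coefficient $\frac{1}{\eta_k} - \frac{L_k}{2}$. Keeping the signs and the base points ($\lbdh^k$ versus $\lbd^{k+1}$) consistent throughout is the only genuinely delicate point; each of the three structural ingredients --- line-search descent, concave supporting lines, and prox optimality --- is routine in isolation.
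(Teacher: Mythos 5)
Your proposal follows essentially the same route as the paper's appendix proof: the optimality conditions of the two subproblems combined with the $\lbd$-update yield the relation $\nabla{d^1_{\gamma}}(\lbdh^k)+\nabla{d^2}(\lbd^{k+1})+\cb=\eta_k^{-1}(\lbd^{k+1}-\lbdh^k)$, the line-search condition \eqref{eq:linesearch_cond} supplies the quadratic bound on the smooth block, and concavity of $d^1_{\gamma}$ and $d^2$ gives the second inequality; the only difference is that you spell out the Danskin/strong-monotonicity argument for concavity, differentiability and the Lipschitz constant where the paper simply cites Nesterov's smoothing results. One remark: your bookkeeping lands on the cross term $-\frac{1}{\eta_k}\iprods{\lbd^{k+1}-\lbdh^k,\lbd-\lbdh^k}$, which is in fact the version consistent with how the estimate is invoked in the proof of Theorem \ref{th:primal_recover1} (the sign of that term in the display \eqref{eq:upper_surrogate} is a typo), so your derivation is the correct one.
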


Our next step is to recover an approximate primal solution $\xbar^k := (\bar{\ub}^k, \bar{\vb}^k)$ of \eqref{eq:constr_cvx} using the weighted averaging  scheme \eqref{eq:averaging_sequence}.
Combing this strategy and \eqref{eq:ama_scheme} we can present the new primal-dual AMA algorithm is as in Algorithm \ref{alg:pd_ama} below.

\begin{algorithm}[!ht]\caption{(\textit{Primal-dual alternating minimization algorithm})}\label{alg:pd_ama}
\begin{algorithmic}
\normalsize
   \STATE {\bfseries Initialization:} 
    \STATE{\hspace{2.5ex}}1.~Choose $\gamma := \frac{\epsilon}{2D_{\Uc}}$, and $\underline{L}$ such that $0 < \underline{L} \leq L^{\gamma}_{d_1} := \frac{\norm{\Ab}^2}{\gamma\mu_p}$.
    \STATE{\hspace{2.5ex}}2.~Choose an initial point $\lbd^0 \in \R^n$. 
    \STATE{\hspace{2.5ex}}3.~Set $S_{-1} := 0$, ~$\ubar^{-1} := 0$ and $\vbar^{-1} := 0$. 
    \FOR{$k := 0$ {\bfseries to} $k_{\max}$}
	\STATE 4. Compute $\ut^k = \hat{\ub}^{k+1} = \ub_{\gamma}^{\ast}(\lbd^k)$ defined in \eqref{eq:d1_gamma}.
	\STATE 5.  Choose $\eta_k\in \left(0, \frac{1}{L^{\gamma}_{d_1}}\right]$ and compute
	\begin{equation*}
	 \hat{\vb}^{k+1} := \mathrm{arg}\displaystyle\min_{\vb\in\Vc}\set{ h(\vb) - \iprods{\Bb^T\lbd^k, \vb} + \frac{\eta_k}{2}\norm{\cb - \Ab\ut^k - \Bb\vb}^2 }.
	 \vspace{-2ex}
	 \end{equation*}
	\STATE 6.  Update $\lbd^{k+1} := \lbd^k + \eta_k\left(\cb - \Ab\hat{\ub}^{k+1} - \Bb\hat{\vb}^{k+1}\right)$.
	\STATE 7.  Compute $\vt^k := \vb^{\ast}(\lbd^{k+1}) \in \vb^{\sharp}\left(\Bb^T\lbd^{k+1}\right)$ defined in \eqref{eq:sharp_oper}.
	\STATE 8.  Update $S_k := S_{k-1} + w_k$, with $w_k := \eta_k$, and $\tau_k := \frac{w_k}{S_k}$.
	\STATE 9. Update $\bar{\ub}^{k} := (1-\tau_k)\bar{\ub}^{k-1}  + \tau_k\tilde{\ub}^k$ and $\bar{\vb}^{k} := (1-\tau_k)\bar{\vb}^{k-1}  + \tau_k\tilde{\vb}^k$.
   \ENDFOR
\STATE {\bfseries Output:} The sequence $\set{\xbar^k}$ with $\xbar^k := (\ubar^k, \vbar^k)$.
\end{algorithmic}
\end{algorithm}
In fact, we can use the Lipschitz constant $L^{\gamma}_{d^1} = \frac{\norm{\Ab}^1}{\gamma\mu_p}$ to compute the constant step $\eta_k$ as $\eta_k := \frac{1}{L^{\gamma}_{d^1}}$ at Step 5.
However, we can adaptively choose $\eta_k = L_k^{-1}$ via a back-tracking line-search procedure in Algorithm~\ref{alg:pd_ama} to guarantee the condition \eqref{eq:linesearch_cond}, and this usually performs better in practice than the constant step-size.

Algorithm~\ref{alg:pd_ama} requires one more \textit{sharp} operator query of $\vb$ at Step 7. 
As  mentioned earlier, when $h_{\Vc}$ has decomposable structures, computing this sharp operator can be done efficiently (e.g., closed form or  parallel/distributed manner).

The following theorem shows the bounds on the objective residual $f(\xbar^k) - \fopt$ and the feasibility gap $\norm{\Ab\bar{\ub}^k + \Bb\bar{\vb}^k - \cb}$ of \eqref{eq:constr_cvx} at $\xbar^k$.

\begin{theorem}\label{th:primal_recover1}
Let $\set{\xbar^k}$ with $\xbar^k := (\bar{\ub}^k, \bar{\vb}^k)$ be the sequence generated by Algorithm \ref{alg:pd_ama} and $L_{d^1} := \frac{\norm{\Ab}^2}{\mu_p}$. 
Then, the following estimates hold:
\begin{align}\label{eq:ama_primal_convergence1}
{\!\!\!\!\!\!\!\!}\left\{\!\!\begin{array}{ll}
&\vert f(\bar{\xb}^k) - f^{\star} \vert \leq \max\left\{ \frac{L_{d^1}\norm{\lbd^0}^2}{\gamma(k+1)} + \gamma D_{\Uc}, \frac{2L_{d^1}\norm{\lbds}\norm{\lbd^0 \!-\! \lbds}}{\gamma(k+1)} + \norm{\lbds}\sqrt{\frac{L_{d^1}D_{\Uc}}{k+1}} \right\},\vspace{1ex}\\
&\norm{\Ab\bar{\ub}^k + \Bb\bar{\vb}^k - \cb} \leq \frac{2L_{d^1}\norm{\lbd^0 - \lbds}}{\gamma(k+1)} + \sqrt{\frac{L_{d^1}D_{\Uc}}{k+1}}.
\end{array}\right.{\!\!\!\!\!\!}
\end{align}
Consequently, if we choose $\gamma := \frac{\epsilon}{2D_{\Uc}}$, which is optimal, then the worst-case iteration-complexity of Algorithm \ref{alg:pd_ama} to achieve the $\epsilon$-solution $\xbar^k$ of \eqref{eq:constr_cvx} in the sense of Definition \ref{de:approx_sols} is $\mathcal{O}\left(\frac{L_{d^1}D_{\Uc}}{\epsilon^2}R_0^2\right)$, where $R_0 := \max\set{2, 3\norm{\lbds}, 2\norm{\lbd^0}, 2\norm{\lbd^0 - \lbds}}$.
\end{theorem}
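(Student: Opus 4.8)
The plan is to turn the single-step estimate \eqref{eq:upper_surrogate} into a weighted primal-dual gap inequality, and then read off the two quantities of Definition \ref{de:approx_sols} by two specialisations of the free multiplier $\lbd$. The starting observation is a Fenchel-type identity: since $\tilde{\ub}^k = \ub_{\gamma}^{*}(\lbd^k)$ and $\tilde{\vb}^k = \vb^{*}(\lbd^{k+1})$ realise the sharp-operators \eqref{eq:sharp_oper}, the affine function $\ell_k^\gamma$ collapses to
\[
\ell_k^\gamma(\lbd) = f(\tilde{\xb}^k) + \gamma p_{\Uc}(\tilde{\ub}^k) - \iprods{\lbd, \rb^k}, \qquad \rb^k := \Ab\tilde{\ub}^k + \Bb\tilde{\vb}^k - \cb,
\]
so that the dual linearisation is exactly the smoothed primal Lagrangian evaluated at the current sharp-operator iterate. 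Moreover, the prox-optimality behind Step 6 gives $\rb^k = -\eta_k^{-1}(\lbd^{k+1} - \lbd^k)$, whence the weighted residuals telescope: $S_k(\Ab\ubar^k + \Bb\vbar^k - \cb) = \sum_{i=0}^k \eta_i\rb^i = \lbd^0 - \lbd^{k+1}$. This single identity already reduces the feasibility gap to a dual displacement, $\norm{\Ab\ubar^k + \Bb\vbar^k - \cb} = S_k^{-1}\norm{\lbd^{k+1} - \lbd^0}$.

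First I would weight \eqref{eq:upper_surrogate} by $w_k = \eta_k$ and sum over $i = 0,\dots,k$. Because the step obeys $\eta_k \le 1/L^{\gamma}_{d^1}$ (so $\eta_k^{-1} - L_k/2 \ge 0$), the three-point identity turns the cross term $\eta_k^{-1}\iprods{\lbd^{k+1}-\lbd^k,\,\lbd-\lbd^k}$ into a telescoping difference $\tfrac12(\norm{\lbd-\lbd^k}^2 - \norm{\lbd-\lbd^{k+1}}^2)$ plus a nonnegative remainder that may be discarded. Invoking Jensen's inequality on the convex $f$ and $p_{\Uc}$ through \eqref{eq:averaging_sequence}, together with the Fenchel identity above, the left-hand sums assemble into $S_k\big(f(\xbar^k) + \gamma p_{\Uc}(\ubar^k) - \iprods{\lbd,\Ab\ubar^k + \Bb\vbar^k - \cb}\big)$. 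Using in addition the monotone increase of $\set{d_\gamma(\lbd^{k})}$ (the $\lbd=\lbd^k$ case of \eqref{eq:upper_surrogate}) to bound the summed dual values by $S_k d_{\gamma}^{\star}$, the outcome is a master gap inequality of the form, valid for every $\lbd\in\R^n$,
\[
S_k\big(f(\xbar^k) - \iprods{\lbd, \Ab\ubar^k + \Bb\vbar^k - \cb} - d_{\gamma}^{\star}\big) \le \tfrac12\norm{\lbd - \lbd^0}^2 - \tfrac12\norm{\lbd - \lbd^{k+1}}^2.
\]
Setting $\lbd = \lbds$ and using the saddle-point bound $\Lc(\xbar^k,\lbds) = f(\xbar^k) - \iprods{\lbds,\Ab\ubar^k + \Bb\vbar^k - \cb} \ge \fopt$ on the left yields the a priori estimate $\norm{\lbd^{k+1}-\lbds}^2 \le \norm{\lbd^0 - \lbds}^2 + 2 S_k(d_{\gamma}^{\star}-\dopt)$, and the smoothing estimate \eqref{eq:d1_gamma_est} caps the last term by $2S_k\gamma D_{\Uc}$. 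This is the only place the prox-diameter enters, and it is what replaces dual boundedness.

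With these ingredients the three bounds in \eqref{eq:ama_primal_convergence1} follow by specialising $\lbd$. For the feasibility gap I combine the residual telescope with $\norm{\lbd^{k+1}-\lbd^0} \le 2\norm{\lbd^0-\lbds} + \sqrt{2 S_k\gamma D_{\Uc}}$ and substitute $S_k = (k+1)\gamma/L_{d^1}$ from the constant step $\eta_k = 1/L^{\gamma}_{d^1}$. For the upper bound on $f(\xbar^k) - \fopt$ I put $\lbd = 0$ in the master inequality, drop $\gamma p_{\Uc}(\ubar^k)\ge 0$, and use $d_{\gamma}^{\star} - \fopt \le \gamma D_{\Uc}$ from \eqref{eq:d1_gamma_est}, producing the first entry of the $\max$. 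For the lower bound I bypass the master inequality entirely: \eqref{eq:lower_bound} directly gives $f(\xbar^k) - \fopt \ge -\norm{\lbds}\,\norm{\Ab\ubar^k + \Bb\vbar^k - \cb}$, and feeding in the feasibility bound yields the second entry of the $\max$; the larger of the two bounds $\vert f(\xbar^k)-\fopt\vert$. Finally, balancing the smoothing error $\gamma D_{\Uc}$ against the dual-convergence factor $S_k^{-1} = L_{d^1}/(\gamma(k+1))$ shows $\gamma = \epsilon/(2D_{\Uc})$ is optimal, and forcing both right-hand sides below $\epsilon$ gives $k+1 = \mathcal{O}(L_{d^1}D_{\Uc}R_0^2/\epsilon^2)$ after collecting constants into $R_0$. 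I expect the main obstacle to be the upper bound on $f(\xbar^k)-\fopt$: unlike the feasibility gap and the lower bound, it cannot be controlled by the dual displacement alone, and one must track how the $\lbd$-quadratic in the master inequality absorbs the unbounded growth of $\norm{\lbd^{k+1}}$, closing the argument purely through the finite budget $\gamma D_{\Uc}$ rather than any boundedness in the dual domain; keeping $w_k=\eta_k$ exactly matched to the step-sizes is what makes both the residual telescope and the three-point sums collapse cleanly.
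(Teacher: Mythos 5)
Your overall architecture coincides with the paper's: weight the one-step estimate \eqref{eq:upper_surrogate} by $w_i=\eta_i$, telescope the three-point terms, use the Fenchel identity to rewrite $\ell_i^{\gamma}(\lbd)$ as $f(\xt^i)+\gamma p_{\Uc}(\ut^i)-\iprods{\lbd,\Ab\ut^i+\Bb\vt^i-\cb}$, apply Jensen, cap the dual values by $d^{\star}+\gamma D_{\Uc}$ via \eqref{eq:d1_gamma_est}, and then specialize $\lbd$ ($\lbd=\lbds$ for the saddle-point inequality, $\lbd=\boldsymbol{0}$ for the objective upper bound, and \eqref{eq:lower_bound} for the lower bound). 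That part is correct and is exactly the paper's proof.

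The one place you deviate is the feasibility gap, and there your argument has a genuine gap. You extract $\norm{\Ab\ubar^k+\Bb\vbar^k-\cb}=S_k^{-1}\norm{\lbd^{k+1}-\lbd^0}$ from the claimed identity $\rb^k=-\eta_k^{-1}(\lbd^{k+1}-\lbd^k)$ with $\rb^k:=\Ab\ut^k+\Bb\vt^k-\cb$. But the dual update in Step 6 uses $\hat{\vb}^{k+1}$ from the penalized subproblem of Step 5, whereas the averaged iterate uses $\vt^k\in\vb^{\#}(\Bb^T\lbd^{k+1})$ from Step 7. These two points both lie in $\partial h_{\Vc}^{*}(\Bb^T\lbd^{k+1})$, but since $h$ is \emph{not} assumed strongly convex this set can be multivalued, so $\vt^k=\hat{\vb}^{k+1}$ is not guaranteed and the residual telescope $S_k(\Ab\ubar^k+\Bb\vbar^k-\cb)=\lbd^0-\lbd^{k+1}$ need not hold. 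The paper avoids this entirely: it takes the master gap inequality you already derived, rearranges it into
\begin{equation*}
\iprods{\Ab\ubar^k+\Bb\vbar^k-\cb,\lbds-\lbd}-\tfrac{1}{2S_k}\norm{\lbd^0-\lbd}^2-\gamma D_{\Uc}\leq 0\quad\forall\lbd\in\R^n,
\end{equation*}
and maximizes the left-hand side over $\lbd$, which yields a quadratic inequality in $\norm{\Ab\ubar^k+\Bb\vbar^k-\cb}$ and hence the stated bound with no reference to $\lbd^{k+1}$. Your a priori estimate $\norm{\lbd^{k+1}-\lbds}^2\leq\norm{\lbd^0-\lbds}^2+2S_k\gamma D_{\Uc}$ is fine, but it is only needed for your route; switching to the maximization step closes the argument with the ingredients you already have (and also removes the spurious $\sqrt{2}$ your version introduces relative to \eqref{eq:ama_primal_convergence1}). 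Everything else, including the choice of $\gamma$ and the complexity count, matches the paper.
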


\begin{proof}
Since $0 <  \eta_i \leq \frac{1}{L^{\gamma}_{d^1}}$ by Step 5 of Algorithm \ref{alg:pd_ama}, for any $\lbd\in\R^n$, it follows from \eqref{eq:upper_surrogate} that
\begin{align}\label{eq:th31_est1}
d_{\gamma}(\lbd^{i+1}) &\geq \ell_i^{\gamma}(\lbd) +  \frac{1}{\eta_i}\iprods{\lbd^{i+1} - \lbd^i, \lbd^i - \lbd} +  \frac{1}{2\eta_i}\norm{\lbd^{i+1} - \lbd^i}^2 \nonumber\\
&=\ell_i^{\gamma}(\lbd)  + \frac{1}{2\eta_i}\left[\norm{\lbd^{i+1} - \lbd}^2 - \norm{\lbd^i - \lbd}^2\right],
\end{align}
where $\ell^{\gamma}_i(\lbd) := d^1_{\gamma}(\lbdh^i) + \iprods{\nabla{d^1_{\gamma}}(\lbdh^i), \lbd - \lbdh^i} + d^2(\lbd^{i+1}) + \iprods{\nabla{d^2}(\lbd^{i+1}),\lbd - \lbd^{i+1}} + \iprods{\cb, \lbd}$ and $\nabla{d^2}(\lbd^{i+1})\in\partial{d^2}(\lbd^{i+1})$ is a subgradient of $d^2$ at $\lbd^{i+1}$.

Next, we consider $\ell_i^{\gamma}(\lbd)$.  
We first note that, for any $i = 0, \cdots, k$, we have
\begin{align}\label{eq:th31_est1b}
d^1_{\gamma}(\lbd^i) \!+\! \iprods{\nabla{d^1_{\gamma}}(\lbd^i), \lbd \!-\! \lbd^i} &= g(\hat{\ub}^{i\!+\!1}) \!+\! \gamma p_{\Uc}(\hat{\ub}^{i\!+\!1}) - \iprods{\Ab\hat{\ub}^{i+1}, \lbd^i} - \iprods{\Ab\hat{\ub}^{i+1}, \lbd - \lbd^i}\nonumber\\
& = g(\hat{\ub}^{i+1}) - \iprods{\Ab\hat{\ub}^{i+1}, \lbd} + \gamma p_{\Uc}(\hat{\ub}^{i+1}).
\end{align} 
Second, by Step 6 of Algorithm \ref{alg:pd_ama}, we have $\vt^i \in \vb^{\sharp}(\Bb^T\lbd^{i+1})$, which implies
\begin{align}\label{eq:th31_est1c}
d^2(\lbd^{i+1}) + \iprods{\nabla{d^2}(\lbd^{i+1}), \lbd - \lbd^{i+1}}  &= h(\tilde{\vb}^i)  - \iprods{\Bb\tilde{\vb}^i,\lbd^{i+1}} - \iprods{\Bb\tilde{\vb}^i, \lbd - \lbd^{i+1}} \nonumber\\
&= h(\tilde{\vb}^i)  - \iprods{\Bb\tilde{\vb}^i,\lbd}.
\end{align}
Summing up \eqref{eq:th31_est1b} and \eqref{eq:th31_est1c} and using the definition of $\ell_i^{\gamma}$, we obtain
\begin{align}\label{eq:th31_est2}
\ell_i^{\gamma}(\lbd) &= g(\ut^i) \!+\! h(\vt^i) \!-\! \iprods{\Ab\ut^i + \Bb\vt^i \!-\! \cb,\lbdh^i} + \iprods{\cb \!-\! \Ab\ut^i - \Bb\vt^i, \lbd - \lbdh^i} \!+\! \gamma p_{\Uc}(\ut^i) \nonumber\\
&=f(\xt^i) - \iprods{\Ab\ut^i + \Bb\vt^i - \cb,\lbd} + \gamma p_{\Uc}(\ut^i).
\end{align}
By \eqref{eq:d1_gamma_est}, we have $d_{\gamma}(\lbd) \leq d(\lbd) + \gamma D_{\Uc} \leq d^{\star} + \gamma D_{\Uc} := \bar{d}^{\star}_{\gamma}$ for any $\lbd\in\R^n$.
Substituting \eqref{eq:th31_est2} into \eqref{eq:th31_est1}, subtracting to $\bar{d}^{\star}_{\gamma}$, and summing up the result from $i=0$ to $k$, we obtain
\begin{align}\label{eq:th31_est2b}
\sum_{i=0}^k\eta_i\big[\bar{d}^{\star}_{\gamma} - d_{\gamma}(\lbdh^{i+1})\big] &\leq \sum_{i=0}^k\eta_i\big[\bar{d}^{\star}_{\gamma} - f(\xt^i) +  \iprods{\Ab\ut^i + \Bb\vt^i - \cb,\lbd} - \gamma p_{\Uc}(\ut^i)\big] \nonumber\\
&+ \frac{1}{2}\left[\norm{\lbdh^0 - \lbd}^2 - \norm{\lbdh^{k+1} - \lbd}^2\right].
\end{align}
On the one hand, we note that $d(\lbd) \leq d^{\star} = f^{\star} \leq \Lc(\xb, \lbds) = f(\xb) - \iprods{\Ab\ub + \Bb\vb - \cb, \lbds}$ for any $\lbd\in\R^n$ and $\xb\in\Xc$ due to strong duality. 
Hence, $\iprods{\Ab\ubar^k + \Bb\vbar^k - \cb, \lbds} \leq  f(\xbar^k) - d^{\star}$. Moreover, $\bar{d}^{\star}_{\gamma} - d_{\gamma}(\lbdh^{i+1}) \geq 0$.
On the other hand, using the convexity of $f$ we have $S_kf(\xbar^k) \leq \sum_{i=0}^kw_if(\xt^i)$ and $S_k\iprods{\Ab\bar{\ub}^k + \Bb\bar{\vb}^k - \cb, \lbd} = \sum_{i=0}^kw_i\iprods{\Ab\ut^i + \Bb\vt^i - \cb, \lbd}$ for $w_i := \eta_i$.
Combining these expressions into \eqref{eq:th31_est2b}, and noting that $ 0 \leq p_{\Uc}(\ut^i) \leq D_{\Uc}$, we can derive
\vspace{-0.5ex}
\begin{align*}
0 &\leq \sum_{i=0}^kw_i\big[\bar{d}^{\star}_{\gamma} - f(\xt^i) +  \iprods{\Ab\ut^i + \Bb\vt^i - \cb,\lbd} - \gamma p_{\Uc}(\ut^i)\big] + \frac{1}{2}\norm{\lbd^0 - \lbd}^2\nonumber\\
&\leq S_k\big[d^{\star} - f(\xbar^k) + \iprods{\Ab\ubar^k + \Bb\vbar^k - \cb,\lbd} + \gamma D_{\Uc}  \big] + \frac{1}{2}\norm{\lbdh^0 - \lbd}^2,
\vspace{-0.5ex}
\end{align*}
which implies
\vspace{-0.5ex}
\begin{align}\label{eq:th31_est3a}
{\!\!\!\!\!}\iprods{\Ab\ubar^k \!+\! \Bb\vbar^k \!-\! \cb, \lbds} &\leq f(\xbar^k) \!-\! d^{\star}  \leq  \iprods{\Ab\ubar^k 
\!\!+\! \Bb\vbar^k \!\!-\! \cb,\lbd}   +  \frac{1}{2S_k}\norm{\lbdh^0 \!\!-\! \lbd}^2 \!+\! \gamma D_{\Uc}.{\!\!\!}
\vspace{-0.5ex}
\end{align}
Hence, we obtain
\vspace{-0.5ex}
\begin{align}\label{eq:th31_est3}
\iprods{\Ab\ubar^k + \Bb\vbar^k - \cb, \lbds - \lbd}  - \frac{1}{2S_k}\norm{\lbdh^0 - \lbd}^2 - \gamma D_{\Uc} \leq 0,
\vspace{-0.5ex}
\end{align}
for all $\lbd\in\R^n$.
Since \eqref{eq:th31_est3} holds for all $\lbd\in\R^n$, we can show that
\vspace{-0.5ex}
\begin{align}\label{eq:th31_est4}
\max_{\lbd\in\R^n}\set{ \iprods{\Ab\ubar^k + \Bb\vbar^k - \cb, \lbds - \lbd}  - \frac{1}{2S_k}\norm{\lbdh^0 - \lbd}^2 - \gamma D_{\Uc} } \leq 0,
\vspace{-0.5ex}
\end{align}
By optimizing the left-hand side over $\lbd\in\R^n$ and using $\lbd^0 = \lbdh^0$, we obtain
\vspace{-0.5ex}
\begin{align*}
S_k\norm{\Ab\ubar^k + \Bb\vbar^k - \cb}^2 + 2\iprods{\Ab\ubar^k + \Bb\vbar^k - \cb + \rb, \lbd^0 - \lbds}  - \gamma D_{\Uc} \leq 0.
\vspace{-0.5ex}
\end{align*}
Using the Cauchy-Schwarz inequality, we have $\iprods{\Ab\ubar^k + \Bb\vbar^k - \cb , \lbd^0 - \lbds} \leq \norm{\Ab\ubar^k + \Bb\vbar^k - \cb}\norm{\lbd^0 - \lbds}$.
Hence, the last inequality leads to
\vspace{-0.5ex}
\begin{align}\label{eq:th31_est5}
\norm{\Ab\ubar^k + \Bb\vbar^k - \cb}  &\leq \frac{\norm{\lbd^0 \!-\! \lbds} + \sqrt{\norm{\lbd^0 \!-\! \lbds}^2 \!+\! \gamma S_kD_{\Uc}}}{S_k} \nonumber\\
& \leq \frac{2\norm{\lbd^0 \!-\! \lbds} }{S_k} + \sqrt{\frac{\gamma D_{\Uc}}{S_k}}.
\vspace{-0.5ex}
\end{align}
Now, since $w_i = \eta_i \geq \frac{\gamma}{L_{d^1}}$ for $i=0$ to $k$, where $L_{d^1} := \frac{\norm{\Ab}^2}{\mu_p}$. Hence, $S_k \geq \frac{\gamma(k+1)}{L_{d^1}}$. 
Substituting this bound into \eqref{eq:th31_est5}, we obtain the second inequality of \eqref{eq:ama_primal_convergence1}.

To prove the first inequality of \eqref{eq:ama_primal_convergence1}, we note from \eqref{eq:th31_est3a} and $f^{\star} = d^{\star}$ that
\vspace{-0.5ex}
\begin{align*} 
f(\xbar^k) - f^{\star}  \leq  \iprods{\Ab\ubar^k + \Bb\vbar^k - \cb,\lbd} + \frac{1}{2S_k}\norm{\lbd^0 - \lbd}^2 + \gamma D_{\Uc}.
\vspace{-0.5ex}
\end{align*}
Taking $\lbd = \boldsymbol{0}^n$ into this inequality, we get
\vspace{-0.5ex}
\begin{align*}
f(\xbar^k) - f^{\star} &\leq  \frac{1}{2S_k}\norm{\lbd^0}^2 + \gamma D_{\Uc}  \leq \frac{L_{d^1}}{\gamma(k+1)}\norm{\lbd^0}^2 + \gamma D_{\Uc}.
\vspace{-0.5ex}
\end{align*}
Combining this inequality,  \eqref{eq:lower_bound}, and the second estimate of \eqref{eq:ama_primal_convergence1}, we obtain the first estimate of \eqref{eq:ama_primal_convergence1}.

Let us choose $\gamma$ such that $\frac{2L_{d^1}r_0}{\gamma(k+1)} = \sqrt{\frac{L_{d^1}D_{\Uc}}{k+1}}$, where $r_0 := \max\set{ \norm{\lbd^0 - \lbds}, \norm{\lbd^0}}$. 
Then, $\gamma = \frac{2r_0\sqrt{L_{d^1}}}{\sqrt{D_{\Uc}(k+1)}}$. Substituting this expression into \eqref{eq:ama_primal_convergence1}, we obtain
\vspace{-0.5ex}
\begin{equation*}
\begin{cases}
\vert f(\xbar^k) - \fopt \vert &\leq \max\set{\frac{2r_0\sqrt{L_{d^1}D_{\Uc}}}{\sqrt{k+1}}, \frac{3\norm{\lbds}\sqrt{L_{d^1}D_{\Uc}}}{\sqrt{k+1}} } \leq \epsilon \\
\norm{\Ab\bar{\ub}^k + \Bb\bar{\vb}^k - \cb} &\leq \frac{3\sqrt{L_{d^1}D_{\Uc}}}{\sqrt{k+1}} \leq \epsilon.
\vspace{-0.5ex}
\end{cases}
\end{equation*} 
Consequently, we obtain the worst-case complexity of Algorithm \ref{alg:pd_ama} from the  last estimates, which is  $\mathcal{O}\left(\frac{L_{d^1}D_{\Uc}}{\epsilon^2}R_0^2\right)$, where $R_0 := \max\set{2, 3\norm{\lbds}, 2\norm{\lbd^0}, 2\norm{\lbd^0 - \lbds}}$. 
In this case, we can also show that $\gamma = \frac{\epsilon}{2D_{\Uc}}$.
\Eproof
\end{proof}

\begin{remark}\label{re:line-search}
If we apply a back-tracking line-search with a bi-section strategy on $\eta_k$, then we have $0 < \eta_k \leq \frac{2}{L^{\gamma}_{d^1}}$ at Step 5 of Algorithm \ref{alg:pd_ama}. In this case, the bounds in Theorem~\ref{th:primal_recover1} still hold with $L_{d^1} = \frac{2\norm{\Ab}^2}{\mu_p}$ instead of $L_{d^1} = \frac{\norm{\Ab}^2}{\mu_p}$.
\end{remark}

\vspace{-3.5ex}
\section{The accelerated primal-dual alternating minimization algorithm}\label{sec:ac_ama}
\vspace{-2.5ex}
In this section, we incorperate  Nesterov's accelerated step into Algorithm \ref{alg:pd_ama} as done in  \cite{Goldstein2012}, but applying to \eqref{eq:smoothed_dual_prob} to obtain a new accelerated primal-dual AMA variant. 
Clearly, this algorithm can be viewed as the FISTA scheme \cite{Beck2009} applying to the smoothed dual problem \eqref{eq:smoothed_dual_prob}.

Let $t_0 := 1$ and $\lbdh^0 := \lbd^0\in\R^n$.
The main step at the iteration $k$ of the accelerated AMA method is presented as follows:
\begin{equation}\label{eq:fama_scheme}
\left\{\begin{array}{ll}
\hat{\ub}^{k+1} &:= \displaystyle\argmin_{\ub\in\Uc}\set{ g(\ub) - \iprods{\Ab^T\lbdh^k, \ub} + \gamma p_{\Uc}(\ub) } = \nabla{g_{\gamma}^{*}}(\Ab^T\lbdh^k) ,\\
\hat{\vb}^{k+1} &:= \displaystyle\argmin_{\vb\in\Vc}\set{ h(\vb) - \iprods{\Bb^T\lbdh^k, \vb} + \frac{\eta_k}{2}\norm{\cb - \Ab\hat{\ub}^{k+1} - \Bb\vb }^2 },\\
\lbd^{k+1} &:= \lbdh^k + \eta_k\left(\cb - \Ab\hat{\ub}^{k+1} - \Bb\hat{\vb}^{k+1}\right),\vspace{0.5ex}\\
t_{k+1} &:= \frac{1}{2}\big(1 + \sqrt{1 + 4t_k^2}\big),\vspace{0.5ex}\\
\lbdh^{k+1} &:= \lbd^{k+1} + \frac{t_k-1}{t_{k+1}}\big(\lbd^{k+1} - \lbdh^k\big),
\end{array}\right.
\end{equation}
where, again, $g_{\gamma}(\cdot) := g(\cdot) + \gamma p_{\Uc}(\cdot)$.
We now combine the accelerated AMA step \eqref{eq:fama_scheme} and the weighted averaging scheme \eqref{eq:averaging_sequence} to construct a new accelerated primal-dual AMA method as presented in Algorithm \ref{alg:pd_ac_ama} below.

\begin{algorithm}[!ht]\caption{(\textit{Accelerated primal-dual  alternating minimization algorithm})}\label{alg:pd_ac_ama}
\normalsize
\begin{algorithmic}
   \STATE {\bfseries Initialization:} 
    \STATE{\hspace{2.5ex}}1. Choose $\gamma := \frac{\epsilon}{D_{\Uc}}$, and $\underline{L}$ such that $0 < \underline{L} \leq L^{\gamma}_{d^1} := \frac{\norm{\Ab}^2}{\gamma\mu_p}$.
    \STATE{\hspace{2.5ex}}2. Choose an initial point $\lbd^0 \in \R^n$. 
    \STATE{\hspace{2.5ex}}3.  Set $t_0 := 1$ and $\lbdh^0 := \lbd^0$. Set $S_{-1} := 0$, ~$\ubar^{-1} := 0$ and $\vbar^{-1} := 0$. 
    \FOR{$k := 0$ {\bfseries to} $k_{\max}$}
	\STATE 4. Compute $\ut^k = \hat{\ub}^{k+1} = \ub_{\gamma}^{\ast}(\lbdh^k)$ defined in \eqref{eq:smoothed_dual_prob}.
	\STATE 5.  Choose $\eta_k\in \left(0, \frac{1}{L^{\gamma}_{d^1}}\right]$ and compute
	\begin{equation*}
	 \hat{\vb}^{k+1} := \mathrm{arg}\displaystyle\min_{\vb\in\Vc}\set{ h(\vb) - \iprods{\Bb^T\lbdh^k, \vb} + \frac{\eta_k}{2}\norm{\Ab\ut^k + \Bb\vb - \cb}^2}.
	 \vspace{-2ex}
	 \end{equation*}
	\STATE 6.  Update $\lbd^{k+1} := \lbdh^k + \eta_k(\cb - \Ab\hat{\ub}^{k+1} - \Bb\hat{\vb}^{k+1})$.
	\STATE 7. Update $t_{k+1} := 0.5\big(1 + (1 + 4t_k^2)^{1/2}\big)$ and $\lbdh^{k+1} := \lbd^{k+1} + \frac{t_k-1}{t_{k+1}}(\lbd^{k+1} - \lbdh^k)$.
	\STATE 8.  Compute $\vt^k := \vb^{\ast}(\lbd^{k+1}) \in \vb^{\sharp}(\Bb^T\lbd^{k+1})$ defined in \eqref{eq:sharp_oper}.
	\STATE 9.  Update $S_k := S_{k-1} + w_k$, with $w_k := \eta_kt_k$, and $\tau_k := \frac{w_k}{S_k}$.
	\STATE 10. Update $\bar{\ub}^{k} := (1-\tau_k)\bar{\ub}^{k-1}  + \tau_k\tilde{\ub}^k$ and $\bar{\vb}^{k} := (1-\tau_k)\bar{\vb}^{k-1}  + \tau_k\tilde{\vb}^k$.
   \ENDFOR
\STATE {\bfseries Output:} The primal sequence $\set{\xbar^k}$ with $\xbar^k := (\ubar^k, \vbar^k)$.
\end{algorithmic}
\end{algorithm}
Similar to Algorithm \ref{alg:pd_ama}, if we know the Lipschitz constant $L^{\gamma}_{d^1}$ a priori, we can use $\eta_k := \frac{1}{L^{\gamma}_{d^1}}$.
However, we can also use a backtracking line-search to adaptively choose $\eta_k := L_k^{-1}$ such that the condition \eqref{eq:linesearch_cond} holds.
We note that the complexity-per-iteration of Algorithm \ref{alg:pd_ac_ama} essentially remains the same as in Algorithm \ref{alg:pd_ama}.

The following theorem provides the bound on the absolute objective residual and the primal feasibility gap at the iteration $\xbar^k$ for Algorithm \ref{alg:pd_ac_ama}.

\begin{theorem}\label{th:primal_convergence2}
Let $\{\xbar^k\}$ be the sequence generated by Algorithm \ref{alg:pd_ac_ama} and $L_{d^1} := \frac{\norm{\Ab}^2}{\mu_p}$.
Then, the following estimates hold:
\begin{equation}\label{eq:primal_convergence2}
\left\{{\!\!\!}\begin{array}{ll}
&\vert f(\bar{\xb}^k) - f^{\star} \vert \leq \max\left\{ \frac{2L_{d^1}\norm{\lbd^0}^2}{\gamma(k+1)(k+2)} + \gamma D_{\Uc}, \frac{8L_{d^1}\norm{\lbds}\norm{\lbd^0 - \lbds}}{\gamma(k+1)(k+2)} + \norm{\lbds}\sqrt{\frac{4L_{d^1}D_{\Uc}}{(k+1)(k+2)}} \right\},\vspace{1ex}\\
&\norm{\Ab\bar{\ub}^k + \Bb\bar{\vb}^k - \cb} \leq \frac{8L_{d^1}\norm{\lbd^0 - \lbds}}{\gamma(k+1)(k+2)} + \sqrt{\frac{4L_{d^1}D_{\Uc}}{(k+1)(k+2)}}.
\end{array}\right.
\end{equation}
Consequently, if we choose $\gamma := \frac{\epsilon}{D_{\Uc}}$, which is optimal, then the worst-case iteration-complexity of Algorithm~\ref{alg:pd_ac_ama} to achieve an $\epsilon$-solution $\xbar^k$ of \eqref{eq:constr_cvx} in the sense of Definition~\ref{de:approx_sols} is $\mathcal{O}\left(\frac{\sqrt{L_{d^1}D_{\Uc}}}{\epsilon}R_0\right)$, where 
$R_0 := \max\set{4, \frac{9}{2}\norm{\lbd^0}, \frac{9}{2}\norm{\lbd^0 - \lbds}, 4\norm{\lbds}}$.
\end{theorem}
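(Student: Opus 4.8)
The plan is to reproduce the proof of Theorem~\ref{th:primal_recover1} line by line, replacing the plain weighted summation by Nesterov's accelerated telescoping so as to upgrade the $\mathcal{O}(1/k)$ rate to $\mathcal{O}(1/k^2)$ while leaving the primal-recovery bookkeeping untouched. Because Step~4 of Algorithm~\ref{alg:pd_ac_ama} computes $\ut^k = \ub_{\gamma}^{\ast}(\lbdh^k)$ and Step~6 produces $\lbd^{k+1}$ from the \emph{accelerated} point $\lbdh^k$, the forward-backward estimate \eqref{eq:upper_surrogate} of Lemma~\ref{le:upper_surrogate} applies with base point $\lbdh^k$ and new point $\lbd^{k+1}$. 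As in \eqref{eq:th31_est2}, I would first rewrite the affine model $\ell_k^{\gamma}$ in primal form, namely $\ell_k^{\gamma}(\lbd) = f(\xt^k) - \iprods{\Ab\ut^k + \Bb\vt^k - \cb,\lbd} + \gamma p_{\Uc}(\ut^k)$, which is legitimate because $\ut^k$ solves the $\lbdh^k$-subproblem and $\vt^k\in\vb^{\sharp}(\Bb^T\lbd^{k+1})$ by Step~8.

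The heart of the argument is the accelerated telescoping. Multiplying the per-iteration estimate (valid since $\eta_k\le 1/L^{\gamma}_{d^1}$ keeps the coefficient $\tfrac{1}{\eta_k}-\tfrac{L_k}{2}\ge\tfrac{1}{2\eta_k}$) by the weight $w_k := \eta_k t_k$ and combining two consecutive copies --- one at the running iterate and one at the reference $\lbd$ --- via the momentum rule $\lbdh^{k+1}=\lbd^{k+1}+\tfrac{t_k-1}{t_{k+1}}(\lbd^{k+1}-\lbdh^k)$ of \eqref{eq:fama_scheme} together with the defining relation $t_{k+1}^2-t_{k+1}=t_k^2$, is exactly the Beck--Teboulle manipulation. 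It forces a quadratic FISTA potential to collapse, leaving a single initial term $\tfrac12\norm{\lbd^0-\lbd}^2$ plus the accumulated primal data $\sum_{i=0}^{k}w_i\big[\bar d^{\star}_{\gamma}-f(\xt^i)+\iprods{\Ab\ut^i+\Bb\vt^i-\cb,\lbd}-\gamma p_{\Uc}(\ut^i)\big]$, where $\bar d^{\star}_{\gamma}:=\dopt+\gamma D_{\Uc}$; by Jensen's inequality and the averaging definition \eqref{eq:averaging_sequence} these assemble into $S_k f(\xbar^k)$ and $S_k\iprods{\Ab\ubar^k+\Bb\vbar^k-\cb,\lbd}$, precisely as in \eqref{eq:th31_est2b}. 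I expect this telescoping --- checking that the weights $w_k=\eta_k t_k$ make the momentum terms cancel while the affine-in-$\lbd$ primal terms survive --- to be the only real obstacle; it is what dictates the choice $w_k=\eta_k t_k$, since the identity $\sum_{i=0}^{k}t_i=t_k^2$ (proved below) then gives $S_k=\tfrac{\gamma}{L_{d^1}}t_k^2$, exactly the FISTA potential.

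Using $d_{\gamma}(\lbd)\le\bar d^{\star}_{\gamma}$ from \eqref{eq:d1_gamma_est} and $0\le p_{\Uc}(\ut^i)\le D_{\Uc}$, the telescoped inequality reduces to the master bound
\[
0 \le S_k\big[\dopt - f(\xbar^k) + \iprods{\Ab\ubar^k+\Bb\vbar^k-\cb,\lbd} + \gamma D_{\Uc}\big] + \tfrac12\norm{\lbd^0-\lbd}^2, \quad \forall\lbd\in\R^n,
\]
which is the exact accelerated analogue of the inequality preceding \eqref{eq:th31_est3a}. From here the endgame is verbatim Theorem~\ref{th:primal_recover1}: setting $\lbd=\lbds$ gives the lower bound $\iprods{\Ab\ubar^k+\Bb\vbar^k-\cb,\lbds}\le f(\xbar^k)-\dopt$; maximizing the left-hand side over $\lbd$, completing the square and applying Cauchy--Schwarz bounds the feasibility gap by $\tfrac{2\norm{\lbd^0-\lbds}}{S_k}+\sqrt{\gamma D_{\Uc}/S_k}$; taking $\lbd=\boldsymbol{0}^n$ bounds $f(\xbar^k)-\fopt$ above by $\tfrac{1}{2S_k}\norm{\lbd^0}^2+\gamma D_{\Uc}$; and combining with \eqref{eq:lower_bound} yields the absolute residual in \eqref{eq:primal_convergence2}.

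It remains only to quantify $S_k$. With $w_i=\eta_i t_i$ and the constant step $\eta_i=1/L^{\gamma}_{d^1}=\gamma/L_{d^1}$ (the line-search case of Remark~\ref{re:line-search} merely replaces $L_{d^1}$ by $2L_{d^1}$), the key fact is $\sum_{i=0}^{k}t_i=t_k^2$, obtained by telescoping $t_i^2-t_{i-1}^2=t_i$ (a rewriting of $t_{k+1}^2-t_{k+1}=t_k^2$) from $t_0=1$. Since $t_{k+1}-t_k=\tfrac{t_{k+1}}{t_{k+1}+t_k}\ge\tfrac12$ gives $t_k\ge(k+2)/2$, this yields $S_k=\tfrac{\gamma}{L_{d^1}}t_k^2\ge\tfrac{\gamma(k+1)(k+2)}{4L_{d^1}}$. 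Substituting this into the feasibility and objective bounds produces \eqref{eq:primal_convergence2}. Finally, balancing the two terms in each estimate --- choosing $\gamma$ so that $\tfrac{8L_{d^1}r_0}{\gamma(k+1)(k+2)}=\sqrt{\tfrac{4L_{d^1}D_{\Uc}}{(k+1)(k+2)}}$ with $r_0:=\max\{\norm{\lbd^0},\norm{\lbd^0-\lbds}\}$ --- makes both bounds $\mathcal{O}\big(\sqrt{L_{d^1}D_{\Uc}}\,R_0/(k+2)\big)$, so that $k=\mathcal{O}(\sqrt{L_{d^1}D_{\Uc}}\,R_0/\epsilon)$ iterations suffice for an $\epsilon$-solution, the optimal smoothing parameter simplifying to $\gamma=\epsilon/D_{\Uc}$.
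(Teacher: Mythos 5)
Your proposal follows essentially the same route as the paper's proof: the FISTA-type two-copy convex combination driven by $t_{k+1}^2-t_{k+1}=t_k^2$ and the weights $w_k=\eta_k t_k$ collapses to exactly the master bound you state (the paper carries it out via the reparametrization $\tau_k=1/t_k$ and the auxiliary point $\tilde{\lbd}^k$, with $\tilde{\lbd}^0=\lbd^0$), after which the endgame is verbatim Theorem~\ref{th:primal_recover1}. Your estimate of $S_k$ via the identity $\sum_{i=0}^k t_i=t_k^2$ is a correct cosmetic variant of the paper's direct bound $t_i\ge(i+1)/2$, and both yield $S_k\ge\gamma(k+1)(k+2)/(4L_{d^1})$.
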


\begin{proof}
If we define $\tau_k := \frac{1}{t_k}$, then $\tau_0 = 1$, and by Step 7 of Algorithm \ref{alg:pd_ac_ama}, one has $\tau_{k+1}^2 = (1-\tau_{k+1})\tau_k^2$. 
Moreover, if we define $\tilde{\lbd}^k := \frac{1}{\tau_k}\big(\lbdh^k - (1-\tau_k)\lbd^k\big)$, then $\tilde{\lbd}^0 = \lbdh^0 = \lbd^0$.
Using Step 7 of Algorithm \ref{alg:pd_ac_ama}, we can also derive $\tilde{\lbd}^{k+1} = \frac{1}{\tau_{k+1}}\big(\lbdh^{k+1} - (1-\tau_{k+1})\lbd^{k+1}) = \tilde{\lbd}^k + \frac{1}{\tau_k}\big(\lbd^{k+1} - \lbdh^k\big)$.

By \eqref{eq:d1_gamma_est}, we have $d_{\gamma}(\lbd) \leq d(\lbd) + \gamma D_{\Uc} \leq d^{\star} + \gamma D_{\Uc} := \bar{d}^{\star}_{\gamma}$. Hence, $\bar{d}^{\star}_{\gamma} - d_{\gamma}(\lbd) \geq 0$ for any $\lbd\in\R^n$.
For $i=0,\cdots, k$, let $\ell^{\gamma}_i(\lbd) := d^1_{\gamma}(\lbdh^i) + \iprods{\nabla{d^1_{\gamma}}(\lbdh^i), \lbd - \lbdh^i} + d^2(\lbd^{i+1}) + \iprods{\nabla{d^2}(\lbd^{i+1}),\lbd - \lbd^{i+1}} + \iprods{\cb, \lbd}$.
Then, from \eqref{eq:upper_surrogate} with $0 < \eta_i \leq \gamma L_{d^1}^{-1}$, and $\ell^{\gamma}_i(\lbd^i) = d_{\gamma}^1(\lbdh^i) + \iprods{\nabla{d_{\gamma}^1}(\lbdh^i), \lbd^i - \lbdh^i} + d^2(\lbd^{i+1}) + \iprods{\nabla{d^2}(\lbd^{i+1}),\lbd^i - \lbd^{i+1}} + \iprods{\cb, \lbd} \geq d^1_{\gamma}(\lbd^i) + d^2(\lbd^i) + \iprods{\cb, \lbd} = d_{\gamma}(\lbd^i)$, we have
\begin{align}\label{eq:lm44_est1}
\begin{array}{ll}
\bar{d}_{\gamma}^{\star} \!-\! d_{\gamma}(\lbd^{i\!+\!1}) &\leq \bar{d}_{\gamma}^{\star} \!-\! \ell^{\gamma}_i(\lbd) - \eta_i^{-1}\iprods{\lbd^{i\!+\!1} - \lbdh^i, \lbdh^i \!-\! \lbd} \!-\! \frac{1}{2\eta_i}\norm{\lbd^{i\!+\!1} \!-\! \lbdh^i}^2,\\
\bar{d}_{\gamma}^{\star} - d_{\gamma}(\lbd^{i+1}) &\leq \bar{d}_{\gamma}^{\star} \!-\! d_{\gamma}(\lbd^i) \!-\! \eta_i^{-1}\iprods{\lbd^{i\!+\!1} \!-\! \lbdh^i,  \lbdh^i \!-\! \lbd^i} \!-\! \frac{1}{2\eta_i}\norm{\lbd^{i\!+\!1} \!-\! \lbdh^i}^2.
\end{array}
\end{align}
Multiplying the first inequality of \eqref{eq:lm44_est1} by $\tau_i$ and the second one by $(1-\tau_i)$ for $\tau_i \in (0,1)$ and summing the results up, we obtain
\begin{align}\label{eq:lm44_est2}
\bar{d}_{\gamma}^{\star} - d_{\gamma}(\lbd^{i+1}) &\leq (1-\tau_i)[\bar{d}_{\gamma}^{\star} - d_{\gamma}(\lbd^i)] + \tau_i[\bar{d}_{\gamma}^{\star} - \ell^{\gamma}_i(\lbd)] \nonumber\\
&- \frac{1}{\eta_i}\iprods{\lbd^{i+1} - \lbdh^i,  \lbdh^i - (1-\tau_i)\lbd^i - \tau_i\lbd} - \frac{1}{2\eta_i}\norm{\lbd^{i+1} - \lbdh^i}_2^2 \nonumber\\
&= (1-\tau_i)\left[\bar{d}_{\gamma}^{\star} - d_{\gamma}(\lbd^i)\right] + \tau_i\left[\bar{d}_{\gamma}^{\star} - \ell^{\gamma}_i(\lbd)\right]  \nonumber\\
& + \frac{\tau_i}{2\eta_i}\left[\norm{\tilde{\lbd}^i - \lbd}^2 - \norm{\tilde{\lbd}^i + \frac{1}{\tau_i}(\lbd^{i+1} - \lbdh^i) - \lbd}^2 \right],
\end{align}
where $\tilde{\lbd}^i := \frac{1}{\tau_i}\big(\lbdh^i - (1-\tau_i)\lbd^i\big)$.
Now, let $\tilde{\lbd}^{i+1} = \tilde{\lbd}^i + \frac{1}{\tau_i}(\lbd^{i+1} - \lbdh^i)$ as stated above. 
Then, \eqref{eq:lm44_est2} leads to
\begin{align*}
\bar{d}_{\gamma}^{\star} \!-\! d_{\gamma}(\lbd^{i\!+\!1}) \leq (1 \!-\! \tau_i)\left[\bar{d}_{\gamma}^{\star} \!-\! d_{\gamma}(\lbd^i)\right] \!+\! \tau_i\left[\bar{d}_{\gamma}^{\star} \!-\! \ell_i^{\gamma}(\lbd)\right]  \!+\! \frac{\tau_i^2}{2\eta_i}\left[\norm{\tilde{\lbd}^i \!-\! \lbd}^2 - \norm{\tilde{\lbd}^{i\!+\!1} \!-\! \lbd}^2 \right].
\end{align*}
Now, since $\tau_i^2 = (1-\tau_i)\tau_{i-1}^2$ and $\eta_i \leq \eta_{i-1}$, we have $\frac{\eta_i(1-\tau_i)}{\tau_i^2} \leq \frac{\eta_{i-1}}{\tau_{i-1}^2}$.
Then, since $\bar{d}_{\gamma}^{\star} - d_{\gamma}(\lbd^i) \geq 0$, the last inequality implies
\begin{align*} 
\frac{\eta_i}{\tau_i^2}\big[\bar{d}_{\gamma}^{\star} - d_{\gamma}(\lbd^{i+1})\big] &\leq \frac{\eta_{i-1}}{\tau_{i-1}^2}\big[\bar{d}_{\gamma}^{\star} - d_{\gamma}(\lbd^i)\big] + \frac{\eta_i}{\tau_i}\big[\bar{d}_{\gamma}^{\star} - \ell^{\gamma}_i(\lbd)\big]\nonumber\\
&  + \frac{1}{2}\big[\norm{\tilde{\lbd}^i - \lbd}^2 - \norm{\tilde{\lbd}^{i+1} - \lbd}^2 \big].
\end{align*}
Summing up this inequality from $i=0$ to $k$, and using the fact that $\tau_0 = 1$, we obtain
\begin{align}\label{eq:lm44_est3}
\frac{\eta_k}{\tau_k}\left[\bar{d}_{\gamma}^{\star} - d_{\gamma}(\lbd^{k+1})\right] &\leq \frac{\eta_0(1-\tau_0)}{\tau_0^2}\left[\bar{d}_{\gamma}^{\star} - d_{\gamma}(\lbd^{k})\right] + \sum_{i=0}^k\frac{\eta_i}{\tau_i}\left[\bar{d}_{\gamma}^{\star} - \ell^{\gamma}_i(\lbd)\right] \nonumber\\
&  + \frac{1}{2}\left[\norm{\tilde{\lbd}^0 - \lbd}^2 - \norm{\tilde{\lbd}^{k+1} - \lbd}^2 \right]\nonumber\\
&\leq \sum_{i=0}^k\frac{\eta_i}{\tau_i}\left[\bar{d}_{\gamma}^{\star} - \ell^{\gamma}_i(\lbd)\right] + \frac{1}{2}\norm{\tilde{\lbd}^0 - \lbd}^2.
\end{align}
Similar to the proof of \eqref{eq:th31_est2}, we have
\begin{align*} 
\ell^{\gamma}_i(\lbd) = g(\ut^i) + h(\vt^i)  - \iprods{\Ab\ut^i + \Bb\vt^i - \cb,\lbd} + \gamma p_{\Uc}(\ut^i).
\end{align*}
Next, using the convexity of $g$ and $h$, and $p_{\Uc}(\ut^i) \geq 0$, the last inequality implies
\begin{align}\label{eq:lm44_est4}
\sum_{i=0}^k\frac{\eta_i}{\tau_i}\left[\bar{d}_{\gamma}^{\star} - \ell^{\gamma}_i(\lbd)\right] &= \sum_{i=0}^k\frac{\eta_i}{\tau_i}\left[\bar{d}_{\gamma}^{\star}- g(\ut^i) - h(\vt^i)  + \iprods{\Ab\ut^i + \Bb\vt^i - \cb,\lbd} - \gamma p_{\Uc}(\ut^i) \right] \nonumber\\
&\leq S_k\left[\bar{d}_{\gamma}^{\star} -  g(\bar{\ub}^k) - h(\bar{\vb}^k) + \iprods{\Ab\bar{\ub}^k + \Bb\bar{\vb}^k - \cb,\lbd}\right].
\end{align}
Substituting \eqref{eq:lm44_est4} into \eqref{eq:lm44_est3} and noting that $\bar{d}_{\gamma}^{\star} \geq d_{\gamma}(\lbd^{k+1})$, $f(\bar{\xb}^k) = g(\bar{\ub}^k) + h(\bar{\vb}^k)$ and $f^{\star} = d^{\star} = \bar{d}_{\gamma}^{\star} - \gamma D_{\Uc}$, we have
\begin{align}\label{eq:lm44_est5}
f(\bar{\xb}^k) - f^{\star} \leq \iprods{\Ab\bar{\ub}^k + \Bb\bar{\vb}^k - \cb,\lbd} +  \frac{1}{2S_k}\norm{\tilde{\lbd}^0 - \lbd}^2 + \gamma D_{\Uc}.
\end{align}
Moreover, we have $f^{\star} \leq \Lc(\xb, \lbd^{\star}) = f(\xb) - \iprods{\Ab\ub + \Bb\vb - \cb, \lbd^{\star}}$ for $\xb\in\Xc$. 
Substituting $\xb := \bar{\xb}^k$, $\ub :=\bar{\ub}^k$ and $\vb := \bar{\vb}^k$ into this inequality we get
\begin{align}\label{eq:lm44_est6}
f^{\star} \leq f(\bar{\xb}^k) - \iprods{\Ab\bar{\ub}^k + \Bb\bar{\vb}^k - \cb,\lbd^{\star}}.
\end{align}
Combining \eqref{eq:lm44_est5} and \eqref{eq:lm44_est6}, we obtain
\begin{align}\label{eq:lm44_est7}
\iprods{\Ab\bar{\ub}^k + \Bb\bar{\vb}^k - \cb,\lbd^{\star} - \lbd}  - \frac{1}{2S_k}\norm{\tilde{\lbd}^0 - \lbd}^2 - \gamma D_{\Uc} \leq 0,~~\forall\lbd\in\R^n.
\end{align}
Hence, by maximizing the left-hand side over $\lbd\in\R^n$, we finally get
\begin{align*}
\max_{\lbd\in\R^n}\Big\{\iprods{\Ab\bar{\ub}^k + \Bb\bar{\vb}^k - \cb,\lbd^{\star} - \lbd}  - \frac{1}{2S_k}\norm{\tilde{\lbd}^0 - \lbd}^2 - \gamma D_{\Uc} \Big\} \leq 0,
\end{align*}
Solving the maximization problem in this inequality, we can show that
\begin{equation}\label{eq:th32_est10}
\norm{\Ab\bar{\ub}^k + \Bb\bar{\vb}^k - \cb} \leq \frac{2\norm{\lbd^0 - \lbd^{\star}}}{S_k}  + \sqrt{\frac{\gamma D_{\Uc}}{S_k}}.
\end{equation}
We note that $t_k$ updated by Step 6 satisfies: $\frac{k+1}{2} \leq t_k \leq k+1$, and $0 < \eta_k \leq \gamma L_{d^1}^{-1}$. 
Hence, $S_k = \sum_{i=0}^kw_i = \sum_{i=0}^kt_i\eta_i \geq \gamma \sum_{i=0}^k\frac{i+1}{2L_{d^1}} = \frac{\gamma(k+1)(k+2)}{4L_{d^1}}$.
Using this estimate into \eqref{eq:th32_est10}, we get the second estimate of \eqref{eq:primal_convergence2}.

To prove the first estimate of  \eqref{eq:primal_convergence2}, we note from \eqref{eq:lm44_est5} with $\lbd := \boldsymbol{0}^n$ that
\begin{align*}
f(\bar{\xb}^k) - f^{\star} \leq \frac{1}{2S_k}\norm{\lbd^0}^2 + \gamma D_{\Uc} \leq \frac{2L_{d^1}}{\gamma(k+1)(k+2)}\norm{\lbd^0}^2 + \gamma D_{\Uc}.
\end{align*}
Combining this estimate, the second estimate of  \eqref{eq:primal_convergence2}, and \eqref{eq:lower_bound}, we obtain the first estimate of  \eqref{eq:primal_convergence2}.

Let us choose $\gamma > 0$ such that $\frac{8L_{d^1}r_0}{\gamma(k+1)(k+2)} = \sqrt{\frac{4L_{d^1}D_{\Uc}}{(k+1)(k+2)}}$, where $r_0 := \max\set{\norm{\lbd^0}, \norm{\lbd^0 - \lbds}}$. 
Then, $\gamma = \frac{4r_0\sqrt{L_{d^1}}}{\sqrt{D_{\Uc}(k+1)(k+2)}}$.
Substituting this $\gamma$ into \eqref{eq:primal_convergence2}, we obtain
\begin{equation*}
\left\{\begin{array}{ll}
\vert f(\bar{\xb}^k) - f^{\star} \vert &\leq  \max\set{\frac{9r_0\sqrt{L_{d^1}D_{\Uc}}}{2\sqrt{(k+1)(k+2)}}, \frac{4\norm{\lbds}\sqrt{L_{d^1}D_{\Uc}}}{\sqrt{(k+1)(k+2)}}} \leq\epsilon\\
\norm{\Ab\bar{\ub}^k + \Bb\bar{\vb}^k - \cb} &\leq \frac{4\sqrt{L_{d^1}D_{\Uc}}}{\sqrt{(k+1)(k+2)}} \leq \epsilon.
\end{array}\right.
\end{equation*}
Hence, the worst-case complexity of  Algorithm \ref{alg:pd_ac_ama} to achieve the $\epsilon$-solution $\xbar^k$ is $\mathcal{O}\left(\frac{\sqrt{L_{d^1}D_{\Uc}}}{\epsilon}R_0\right)$, where $R_0 := \max\set{4, \frac{9}{2}\norm{\lbd^0}, \frac{9}{2}\norm{\lbd^0 - \lbds}, 4\norm{\lbds}}$.
In this case, we also have $\gamma = \frac{\epsilon}{D_{\Uc}}$.
\Eproof
\end{proof}

\begin{remark}\label{re:bound_Du}
We note that the bounds in Theorems \ref{th:primal_recover1} and \ref{th:primal_convergence2} only essentially depend on the prox-diameter $D_{\Uc}$ of $\Uc$, but not of $\Vc$. Since we can exchange $g$ and $h$ in the alternating step, we can choose $\Uc$ or $\Vc$ that has smaller prox-diameter in our algorithms to smooth its corresponding objective.
\end{remark}

\vspace{-4ex}
\section{Application to strongly convex objectives}\label{sec:strong_convexity_case}
\vspace{-2.5ex}
We assume that either $g$ or $h$ is strongly convex. 
Without loss of generality, we can assume that $g$ is strongly convex with the convexity parameter $\mu_g > 0$ but $h$ remains non-strongly convex, then the dual component $d^1$ is concave and smooth. 
Its gradient $\nabla{d^1}(\lbd) = -\Ab\ub^{\ast}(\lbd)$ is Lipschitz continuous with the Lipschitz constant $L_{d^1} := \frac{\Vert\Ab\Vert^2}{\mu_g}$.
In this case, we can modified Algorithms \ref{alg:pd_ama} and \ref{alg:pd_ac_ama} at the following steps to capture this assumption.

\vspace{1ex}
\noindent\fbox{\parbox{0.98\textwidth}{
\vspace{-1ex}
\begin{itemize}
\item Step 1: Choose $\underline{L}$ such that $0 < \underline{L} \leq L_{d^1} := \frac{\Vert\Ab\Vert^2}{\mu_g}$.
\item Step 4: Compute $\ut^k = \hat{\ub}^{k+1} = \ub^{\ast}(\lbdh^k) = \ub^{\sharp}(\Ab^T\lbdh^k)$ defined by \eqref{eq:sharp_oper}.
\item Step 5: Choose $\eta_k \in (0, L_{d^1}^{-1}]$.
\end{itemize}
\vspace{-1ex}
}}
\vspace{0ex}

We call this modification the \textit{strongly convex variant} of Algorithms \ref{alg:pd_ama} and \ref{alg:pd_ac_ama}, respectively.
In this case, we obtain the following convergence result, which is a direct consequence of Theorems  \ref{th:primal_recover1} and \ref{th:primal_convergence2}.

\vspace{-1ex}
\begin{corollary}\label{co:primal_convergence1b}
Let $g$  be strongly convex with the convexity parameter $\mu_g > 0$.
Assume that $\set{\xbar^k}$ is the sequence generated by the strongly convex variant of Algorithm \ref{alg:pd_ama}. Then
\vspace{-1ex}
\begin{equation}\label{eq:primal_convergence1b}
\left\{\begin{array}{ll}
&\vert f(\xbar^k) - \fopt \vert \leq \frac{\Vert\Ab\Vert^2}{\mu_g(k+1)}\max\set{\norm{\lbd^0}^2, 2\norm{\lbds} \norm{\lbd^0 -  \lbds}},\vspace{0.75ex}\\
&\Vert\Ab\ubar^k + \Bb\vbar^k - \cb\Vert \leq \frac{2\Vert\Ab\Vert^2\norm{\lbd^0 - \lbds}}{\mu_g(k+1)}.
\end{array}\right.
\vspace{-0.5ex}
\end{equation}
Consequently,  the worst-case iteration-complexity of this variant to achieve an $\epsilon$-solution $\xbar^k$ of \eqref{eq:constr_cvx} is $\mathcal{O}\left(\frac{\Vert\Ab\Vert^2R_0}{\mu_g\epsilon}\right)$, where $R_0 := \max\set{\norm{\lbd^0}^2, 2\norm{\lbds}\norm{\lbd^0 - \lbds}}$.

Alternatively, assume that $\set{\xbar^k}$ is the sequence generated by the strongly convex variant of Algorithm \ref{alg:pd_ac_ama}. 
Then
\vspace{-1ex}
\begin{equation}\label{eq:primal_convergence1b}
\left\{\begin{array}{ll}
&\vert f(\xbar^k) - \fopt \vert \leq \frac{2\Vert\Ab\Vert^2}{\mu_g(k+1)(k+2)}\max\set{ \norm{\lbd^0}^2, 4\norm{\lbds} \norm{\lbd^0 -  \lbds} },\vspace{0.75ex}\\
&\Vert\Ab\ubar^k + \Bb\vbar^k - \cb\Vert \leq \frac{8\Vert\Ab\Vert^2\norm{\lbd^0 - \lbds}}{\mu_g(k+1)(k+2)}.
\end{array}\right.
\vspace{-0.5ex}
\end{equation}
Consequently,  the worst-case iteration-complexity of this variant to achieve an $\epsilon$-solution $\xbar^k$ of \eqref{eq:constr_cvx} is $\mathcal{O}\left(\Vert\Ab\Vert\sqrt{\frac{R_0}{\mu_g\epsilon}}\right)$, where $R_0 := \max\set{2\norm{\lbd^0}^2, 8\norm{\lbds}\norm{\lbd^0 - \lbds}}$.
\end{corollary}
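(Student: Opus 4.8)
The plan is to obtain the corollary directly from the proofs of Theorems~\ref{th:primal_recover1} and~\ref{th:primal_convergence2}, exploiting the fact that the strong convexity of $g$ plays exactly the role that the prox-smoothing $\gamma p_{\Uc}$ played in the non-strongly convex analysis. Indeed, when $g$ is $\mu_g$-strongly convex, the dual component $d^1$ is already concave and $C^1$ with $\nabla{d^1}(\lbd) = -\Ab\ub^{\ast}(\lbd)$ Lipschitz continuous of constant $L_{d^1} = \norm{\Ab}^2/\mu_g$, so no smoothing is needed. I would therefore re-run both proofs under the substitution ``$d^1_{\gamma}$ by $d^1$, $d_{\gamma}$ by $d$, and $L^{\gamma}_{d^1}$ by $L_{d^1} = \norm{\Ab}^2/\mu_g$'', and — crucially — with the smoothing bias set to zero, i.e. $\gamma D_{\Uc} = 0$ in \eqref{eq:d1_gamma_est}, since now $d(\lbd) = d(\lbd)$ with no approximation gap. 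The modified Steps~4--5 of the strongly convex variant implement precisely this: Step~4 evaluates the genuine sharp-operator $\ub^{\sharp}(\Ab^T\lbdh^k)$ rather than its smoothed counterpart, and Step~5 takes $\eta_k \in (0, L_{d^1}^{-1}]$.

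The first thing to verify is that the key descent estimate survives this substitution. Lemma~\ref{le:upper_surrogate}, and in particular the chain \eqref{eq:upper_surrogate}, was derived from the quadratic upper bound \eqref{eq:quad_approx} valid for the smooth concave function $d^1_{\gamma}$; since $d^1$ is itself smooth with Lipschitz gradient constant $L_{d^1}$, the identical inequality holds verbatim with $d^1_{\gamma}$ replaced by $d^1$ and $L_k \le L_{d^1}$. Consequently the telescoping step \eqref{eq:th31_est1}, the linearization identities for $\ell^{\gamma}_i$, and the maximization-over-$\lbd$ arguments leading to \eqref{eq:th31_est5} (non-accelerated) and \eqref{eq:th32_est10} (accelerated) all carry through unchanged, except that every occurrence of the bias term $\gamma D_{\Uc}$ now vanishes.

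From there the computation is routine. For the non-accelerated variant, with $w_i = \eta_i = L_{d^1}^{-1}$ one gets $S_k = (k+1)/L_{d^1} = (k+1)\mu_g/\norm{\Ab}^2$; setting $\gamma D_{\Uc}=0$ in \eqref{eq:th31_est5} gives the feasibility bound $\norm{\Ab\ubar^k + \Bb\vbar^k - \cb} \le 2\norm{\lbd^0 - \lbds}/S_k$, and the choice $\lbd=\boldsymbol{0}^n$ in the objective inequality gives $f(\xbar^k)-\fopt \le \norm{\lbd^0}^2/(2S_k)$; combining the latter with the lower bound \eqref{eq:lower_bound} produces the stated $\max$ for $\abs{f(\xbar^k)-\fopt}$. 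For the accelerated variant, the bound $t_k \ge (k+1)/2$ yields $S_k \ge (k+1)(k+2)/(4L_{d^1})$, and the same two substitutions into \eqref{eq:th32_est10} and \eqref{eq:lm44_est5} give the $\mathcal{O}(1/k^2)$ estimates. The iteration-complexities then follow by setting each bound equal to $\epsilon$ and solving for $k$: the $\mathcal{O}(1/k)$ rates give $\mathcal{O}\!\left(\norm{\Ab}^2 R_0/(\mu_g\epsilon)\right)$, while the $\mathcal{O}(1/k^2)$ rates give $\mathcal{O}\!\left(\norm{\Ab}\sqrt{R_0/(\mu_g\epsilon)}\right)$.

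Since the corollary is advertised as a direct consequence, there is no genuinely hard step; the only point requiring care is bookkeeping. I would make sure that dropping the smoothing bias is legitimate — it is, because strong convexity removes the approximation gap in \eqref{eq:d1_gamma_est} entirely — and that $S_k$ no longer carries the spurious factor of $\gamma$, so that the factors $\norm{\Ab}^2/\mu_g$ land in the correct places. Finally I would double-check the constants inside the $\max\set{\cdot,\cdot}$ terms, which arise from pairing the objective upper bound with the $\norm{\lbds}$-weighted feasibility lower bound in \eqref{eq:lower_bound}.
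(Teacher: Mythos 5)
Your proposal is correct and follows exactly the route the paper intends: the corollary is stated there as a direct consequence of Theorems~\ref{th:primal_recover1} and~\ref{th:primal_convergence2}, obtained by replacing $d^1_{\gamma}$ with the already-smooth $d^1$ (Lipschitz constant $L_{d^1}=\norm{\Ab}^2/\mu_g$), dropping the bias $\gamma D_{\Uc}$, and substituting $S_k=(k+1)/L_{d^1}$ (resp.\ $S_k\geq (k+1)(k+2)/(4L_{d^1})$), and your constant bookkeeping reproduces the stated bounds.
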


\vspace{-2ex}
\begin{remark}\label{re:optimal_rate}
It is important to note that, even $h$ is not strongly convex, our accelerated primal-dual AMA algorithm still achieves the $\mathcal{O}(1/\sqrt{\epsilon})$-worst case iteration-complexity, which is different from existing dual accelerated schemes \cite{Beck2014,Necoara2008,necoara2014iteration,Polyak2013}.
In addition, if $h$ is also strongly convex, then the sharp-operator $\vb^{\sharp}(\cdot)$ of $h_{\Vc}$ is well-defined and single-valued  without requiring Assumption A.\ref{as:A1b}.
\end{remark}
\vspace{-1ex}

We note that our results present in Corollary \ref{co:primal_convergence1b} can be considered as the primal-dual variants of the AMA methods in \cite{Goldstein2012}, while the result presented in Theorems \ref{th:primal_recover1} and \ref{th:primal_convergence2} is an extension to the non-strongly convex case.

\vspace{-4ex}
\section{Concluding remarks}\label{sec:concluding}
\vspace{-2.5ex}
We have introduce a new weighted averaging scheme, and combine the AMA idea and Nesterov's smoothing technique to develop new primal-dual AMA methods, Algorithm \ref{alg:pd_ama} and Algorithm \ref{alg:pd_ac_ama}, for solving prototype constrained convex optimization problems of the form \eqref{eq:constr_cvx} without strong convexity assumption.
Then, we have incorporated  Nesterov's accelerated step into Algorithm \ref{alg:pd_ama} to improve the worst-case iteration-complexity of the primal sequence from $\mathcal{O}\left(1/\epsilon^2\right)$ (resp., $\mathcal{O}\left(1/\epsilon\right)$ to $\mathcal{O}\left(1/\epsilon\right)$ (resp., $\mathcal{O}\left(1/\sqrt{\epsilon}\right)$. Our complexity bounds are directly given for the primal objective residual and the primal feasibility gap of \eqref{eq:constr_cvx}, which are new.
Interestingly, the $\mathcal{O}\left(1/\sqrt{\epsilon}\right)$-complexity bound is archived with only the strong convexity of $g$ or $h$, but not both of them.
We will extend this idea to other splitting schemes such as alternating direction methods of multipliers and other sets of assumptions such as the H\"{o}der continuity of the dual gradient in the forthcoming work.
\vspace{-4.5ex}
\appendix
\normalsize
\section{Appendix: The proof of Lemma \ref{le:upper_surrogate}}\label{apdx:le:upper_surrogate}
\vspace{-2.5ex}
The concavity and smoothness of $d_1^{\gamma}$ is trivial \cite{Nesterov2005c}.
In addition, the equivalence between the AMA scheme  \eqref{eq:ama_scheme}  and the forward-backward splitting method was proved in, e.g., \cite{Tseng1991,Goldstein2012}.

Let $g_{\Uc,\gamma} := g_{\gamma} + \delta_{\Uc} = g + \gamma p_{\Uc} + \delta_{\Uc}$ and $h_{\Vc} := h + \delta_{\Vc}$.
We first write the optimality condition for the two convex subproblems in \eqref{eq:ama_scheme} as
\begin{align*}
\nabla{g_{\Uc,\gamma}}(\uhat^{k\!+\!1}) - \Ab^T\lbdh^k = 0, ~\text{and}~\nabla{h_{\Vc}}(\vhat^{k\!+\!1}) - \Bb^T\lbdh^k - \eta_k\Bb^T(\cb - \Ab\uhat^{k\!+\!1} - \Bb\vhat^{k\!+\!1}).
\end{align*}
Using the third line of \eqref{eq:ama_scheme} we obtain from the last expressions that
\begin{align*}
\nabla{g_{\Uc,\gamma}}(\uhat^{k+1}) = \Ab^T\lbdh^k, ~\text{and}~\nabla{h_{\Vc}}(\vhat^{k+1}) = \Bb^T\lbd^{k+1},
\end{align*}
which are equivalent to
\begin{align*}
\uhat^{k+1}  = \nabla{g_{\Uc,\gamma}}^{*}(\Ab^T\lbdh^k), ~\text{and}~\vhat^{k+1} = \nabla{h_{\Vc}}^{*}(\Bb^T\lbd^{k+1}).
\end{align*}
Multiplying these expressions by $\Ab$ and $\Bb$, respectively, and adding them together, and then subtracting to $\cb$, we finally obtain
\begin{align}\label{eq:lm21_est1}
\eta_k^{-1}( \lbd^{k+1} - \lbdh^k) &= \cb - \Ab\uhat^{k+1} - \Bb\vhat^{k+1} \nonumber\\
& = \cb - \Ab\nabla{g_{\Uc,\gamma}}^{*}(\Ab^T\lbdh^k) - \Bb\nabla{h_{\Vc}}^{*}(\Bb^T\lbd^{k+1}).
\end{align}
Now, from the definition \eqref{eq:dual_func_di} of $d^1_{\gamma}$ and $d^2$, we have $\Ab\nabla{g_{\Uc,\gamma}}^{*}(\Ab^T\lbdh^k)  = -\nabla{d^1}(\lbdh^k)$ and $\Bb\nabla{h_{\Vc}}^{*}(\Bb^T\lbd^{k+1}) = -\nabla{d^2}(\lbd^{k+1})$. 
Substituting these relations into \eqref{eq:lm21_est1}, we get
\begin{align}\label{eq:lm21_est2}
\eta_k^{-1}( \lbd^{k+1} - \lbdh^k) = \cb + \nabla{d^1_{\gamma}}(\lbdh^k) + \nabla{d^2}(\lbd^{k+1}).
\end{align}
Next, under the condition \eqref{eq:linesearch_cond}, we can derive
\begin{align}\label{eq:lm21_est3}
d^1_{\gamma}(\lbdh^k) &+ \iprods{\nabla{d^1_{\gamma}}(\lbdh^k), \lbd - \lbdh^k} = d^1_{\gamma}(\lbdh^k) \!+\! \iprods{\nabla{d^1_{\gamma}}(\lbdh^k), \lbd^{k\!+\!1} \!-\! \lbdh^k} \!+\! \iprods{\nabla{d^1_{\gamma}}(\lbdh^k), \lbd \!-\! \lbd^{k\!+\!1}} \nonumber\\
&=Q^{\gamma}_{L_k}(\lbd^{k+1};\lbdh^k) +  \iprods{\nabla{d^1_{\gamma}}(\lbdh^k), \lbd - \lbd^{k+1}} + \frac{L_k}{2}\norm{\lbd^{k+1} - \lbdh^k}^2\nonumber\\
&\overset{\tiny\eqref{eq:linesearch_cond}}{\leq} d^1_{\gamma}(\lbd^{k+1}) +  \iprods{\nabla{d^1_{\gamma}}(\lbdh^k), \lbd - \lbd^{k+1}} + \frac{L_k}{2}\norm{\lbd^{k+1} - \lbdh^k}^2.
\end{align}
Let $\ell_k^{\gamma}(\lbd) := d^1_{\gamma}(\lbdh^k) + \iprods{\nabla{d^1_{\gamma}}(\lbdh^k), \lbd - \lbdh^k} + d^2(\lbd^{k+1}) + \iprods{\nabla{d^2}(\lbd^{k+1}),\lbd - \lbd^{k+1}} + \iprods{\cb, \lbd}$.
Using this experesion in \eqref{eq:lm21_est3}, and then combining the result with \eqref{eq:lm21_est2} and $d_{\gamma}(\cdot) = d^1_{\gamma}(\cdot) + d^2(\cdot) + \iprods{\cb,\cdot}$, we finally get
\begin{align*}
\ell_k^{\gamma}(\lbd) &\leq d_{\gamma}(\lbd^{k+1}) +  \iprods{\nabla{d^1_{\gamma}}(\lbdh^k) + \nabla{d^2}(\lbd^{k+1}) - \cb, \lbd - \lbd^{k+1}} + \frac{L_k}{2}\norm{\lbd^{k+1} - \lbdh^k}^2 \nonumber\\
&=d_{\gamma}(\lbd^{k+1}) + \iprods{\eta_k^{-1}( \lbd^{k+1} - \lbdh^k), \lbd - \lbd^{k+1}} + \frac{L_k}{2}\norm{\lbd^{k+1} - \lbdh^k}^2 \nonumber\\
&=d_{\gamma}(\lbd^{k+1}) + \iprods{\eta_k^{-1}( \lbd^{k+1} - \lbdh^k), \lbd - \lbdh^k} - \left(\frac{1}{\eta_k} - \frac{L_k}{2}\right)\norm{\lbd^{k+1} - \lbdh^k}^2,
\end{align*}
which is the first inequality of \eqref{eq:upper_surrogate}.
The second inequality of \eqref{eq:upper_surrogate} follows from the first one, $d^1_{\gamma}(\lbd^k) + \iprods{\nabla{d^1_{\gamma}}(\lbdh^k), \lbd - \lbdh^k} \geq d^1_{\gamma}(\lbd)$ and $d^2(\lbd^{k+1}) + \iprods{\nabla{d^2}(\lbd^{k+1}),\lbd - \lbd^{k+1}} \geq d^2(\lbd)$ due to the concavity of $d^1_{\gamma}$ and $d^2$, respectively.
\Eproof



\vspace{-1ex}
\bibliographystyle{spmpsci}      


\end{document}